\definecolor{vert}{RGB}{0,205,0}
\definecolor{aqua}{rgb}{0.0,1.0,1.0}
\newtheorem{theorem}{Theorem}[section]
\newtheorem{definitio}[theorem]{Definition}
\newtheorem{rem}[theorem]{Remark}
\newenvironment{remark}{\begin{rem} \rm }{\end{rem}}
\newtheorem{ex}[theorem]{Example}
\newtheorem{lemma}[theorem]{Lemma}
\newtheorem{proposition}[theorem]{Proposition}
\newtheorem{corollary}[theorem]{Corollary}
\newtheorem*{theo}{Theorem}
\newcommand{\N}{\mathbb{N}}
\newcommand{\Z}{\mathbb{Z}}
\newcommand{\R}{\mathbb{R}}
\newcommand{\CP}{\mathbb{CP}}
\newcommand{\RP}{\mathbb{RP}}
\newcommand{\HH}{\mathcal H}
\newcommand{\TT}{\mathcal T}
\newcommand{\fract}[2]{\hbox{\leavevmode\kern.1em \raise .25ex \hbox{\the\scriptfont0 $#1$}\kern-.1em }\big/{\hbox{\kern-.15em \lower .5ex \hbox{\the\scriptfont0 $#2$}}}}
\title{Multisections of surface bundles and bundles over $S^1$}
\author{Delphine Moussard}
\begin{document}

\begin{abstract}
 A multisection is a decomposition of a manifold into $1$--handlebodies, where each subcollection of the pieces intersects along a $1$--handlebody except the global intersection which is a closed surface. These generalizations of Heegaard splittings and Gay--Kirby trisections were introduced by Ben Aribi, Courte, Golla and the author, who proved in particular that any $5$--manifold admits such a multisection. In arbitrary dimension, we show that two classes of manifolds admit multisections: surface bundles and fiber bundles over the circle whose fiber itself is multisected. We provide explicit constructions, with examples.
 \medskip
 
 \noindent
 \textsc{2020 MSC Class:} 57M99, 57Q99, 57R99.
 \medskip
 
 \noindent
 \textsc{Keywords:} Trisection, Multisection, Fiber bundle.
\end{abstract}

\maketitle

\tableofcontents

\section{Introduction}

Heegaard splittings are standard decompositions of closed orientable $3$--manifolds into two handlebodies. In \cite{GayKirby}, Gay and Kirby introduce analogous decompositions of $4$--manifolds, the so-called trisections; they proved that all closed orientable smooth $4$--manifolds admit such trisections. More recently, in \cite{BCGM}, Ben Aribi, Courte, Golla and the author studied a notion of multisection for closed orientable manifolds of any dimension, which encompasses Heegaard splittings and trisections: a multisection of an $(n+1)$--manifold is a decomposition into $n$ $1$--handlebodies such that each subcollection intersects along a $1$--handlebody, except the global intersection which is a closed surface. They proved in particular that any smooth $5$--manifold admits a multisection. Here, we study the existence of multisections in arbitrary dimension for two classes of manifolds: surface bundles and bundles over the circle.
This generalizes works on $4$--manifolds by Williams \cite{Williams} for surface bundles and by Koenig \cite{Koenig} for bundles over the circle.

\begin{theo}[Theorem~\ref{th:SurfaceBundle}]
 Any surface bundle admits a multisection.
\end{theo}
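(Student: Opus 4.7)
The plan is to construct a multisection of the total space $E$ of a surface bundle $F \to E \to B$ by combining a handle decomposition of the base $B$ with a Heegaard-type splitting of the fiber $F$. Since $\dim E = \dim B + 2$ and a multisection of an $(n+1)$-manifold has $n$ pieces, the target is $\dim B + 1$ pieces of $E$. I would extract two of these from a splitting $F = F_- \cup_c F_+$ of the surface along a multicurve $c$ (so that $F_\pm$ are the two sides, meeting along $c$), and the remaining $\dim B - 1$ from a combinatorial decomposition of $B$ compatible with local bundle trivializations.

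The natural inductive framework is on $d = \dim B$. The cases $d = 1$ and $d = 2$ reduce to the classical Heegaard splitting of $F$-bundles over $S^1$ and to the Gay--Kirby trisection of $4$-manifolds respectively, and serve as base cases. For the inductive step, I would attach a new $k$-handle of $B$; over this handle, $E$ gains a region of the form $F \times D^k \times D^{d-k}$ glued to the rest via the bundle's clutching map. I would absorb this product region into the existing multisection by extending each existing piece into the new region in a controlled way, treating $F \times D^k \times D^{d-k}$ as a relative multisection building block. This parallels the way trisections of $4$-manifolds are assembled handle by handle.

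The main obstacle is maintaining the $1$-handlebody condition on each piece and on every multi-intersection. The naive pullback $\pi^{-1}(B_i)$ of a cell of $B$ deformation retracts onto $F$, so is not a $1$-handlebody whenever $F$ has positive genus. The resolution is to cut $F$ along $c$ and redistribute its two halves across the pieces, so that the genus of $F$ is absorbed into additional $1$-handles of the multisection pieces. The delicate technical point is to perform this redistribution coherently over all of $B$ simultaneously, in a manner compatible with the bundle's monodromy, which a priori shuffles $c$; this may force one to stabilise both the fiber splitting and the candidate pieces before they can be glued. Once this is achieved, the central $(\dim B + 1)$-fold intersection is, by construction, the closed surface obtained by fibering $c$ over the $0$-skeleton of the base decomposition.
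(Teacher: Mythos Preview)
Your proposal identifies the central obstacle correctly---pulling back a cell of the base gives something that retracts onto the fiber $F$, not a $1$--handlebody---but the mechanism you propose to fix it does not work as stated, and the missing idea is precisely what drives the paper's proof.

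First, the structural setup is off. You want two of the $n=\dim B+1$ pieces to come from a splitting $F=F_-\cup_c F_+$ and the remaining $\dim B-1$ from the base. But all pieces $W_i$ of a multisection are full--dimensional handlebodies in $E$; a half $F_\pm$ is $2$--dimensional, and you never say what $(\dim B+2)$--dimensional object it is supposed to produce. Taking $F_\pm\times B$ is not a handlebody unless $B$ is a ball, and the alternative---thickening $F_\pm$ only over part of $B$---immediately collides with the ``$\dim B-1$ pieces from the base'' part of the plan, whose combinatorics you never specify. The handle--by--handle induction you sketch (``extend each existing piece into the new region in a controlled way'') is an aspiration, not an argument: the entire difficulty of multisections is that extending pieces across a new handle while preserving the $1$--handlebody condition on \emph{every} subintersection is highly constrained, and you give no reason it can be done here. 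You also acknowledge but do not resolve the monodromy problem for $c$. Finally, your description of the central surface as ``$c$ fibered over the $0$--skeleton'' is a $1$--manifold, not a surface.

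The paper's construction is both simpler and genuinely different in spirit. It does \emph{not} split the fiber along a multicurve and does \emph{not} induct on handles. Instead it takes a \emph{good ball decomposition} $M=\cup_{i=1}^n M_i$ of the base (obtained from a barycentric subdivision; every partial intersection is a disjoint union of balls), so that all $n$ pieces come symmetrically from the base, and then fixes the handlebody defect by a cyclic \emph{disk--tunneling} trick: choose disjoint disk sections $Z_i\cong M_i\times D_i$ over each $M_i$ and set
\[
W_i=\overline{p^{-1}(M_i)\setminus Z_i}\ \cup\ Z_{i+1}.
\]
Removing the disk $D_i$ from each fiber over $M_i$ turns the preimage into a thickened bounded surface (a handlebody), and adding back $Z_{i+1}$ glues adjacent pieces along $1$--handles. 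An explicit closed formula for $W_I$ then checks all the intersection conditions at once. Monodromy never enters, because a disk section over a ball exists trivially. This cyclic shift $Z_i\mapsto Z_{i+1}$ is the key idea your proposal is missing.
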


\begin{theo}[Theorem~\ref{th:bundleS1}]
 Any fiber bundle over $S^1$, whose fiber admits a multisection globally preserved by the monodromy ({\em ie} permuting the pieces of the multisection), admits a multisection.
\end{theo}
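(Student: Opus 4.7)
The plan is to construct a multisection of $M_\phi = (F \times [0,1])/((x,1)\sim(\phi(x),0))$ explicitly, using the hypothesized multisection $F = G_1 \cup \cdots \cup G_{n-1}$ of the fiber, whose central surface is $\Sigma = \bigcap_i G_i$. I take the hypothesis that the multisection is globally preserved by $\phi$ to mean that $\phi(G_i) = G_i$ for every $i$ (so in particular $\phi(\Sigma) = \Sigma$). The target multisection of $M_\phi$ will have $n$ pieces $K_1, \dots, K_n$: one more piece and one more dimension than $F$'s. The essential building block is that the product of a $1$-handlebody with an interval is again a $1$-handlebody, one dimension higher.

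The pieces $K_j$ are built by partitioning $S^1 = [0,1]/\sim$ into arcs and specifying, over each arc, which subset of the fiber belongs to each $K_j$; the subsets are chosen from the collection of intersections $G_I := \bigcap_{i\in I}G_i$ of the pieces of $F$'s multisection. Each $K_j$ becomes a union of ``slab'' pieces of the form $G_I \times(\text{arc})$, glued along their vertical boundaries and along $\Sigma$-fibers in a manner that respects the identification at $t=0\sim 1$---this is where $\phi$-invariance of each $G_i$ is used essentially. I would then verify, in turn: (a) each $K_j$ is an $(n{+}1)$-dimensional $1$-handlebody, by exhibiting it as an iterated gluing of $G_I\times I$ along $1$-handlebodies in their boundaries; (b) for $k<n$, the intersection $K_{j_1}\cap\cdots\cap K_{j_k}$ is an $(n{+}2{-}k)$-dimensional $1$-handlebody, which over each arc reduces to a product of an intersection $G_I$ (a $1$-handlebody by the hypothesis on $F$) with a sub-arc, suitably glued; and (c) the global intersection $K_1\cap\cdots\cap K_n$ is a closed surface, concretely $\Sigma$ sitting over a distinguished point of $S^1$.

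The principal obstacle is the combinatorial design in (b). A naive assignment of the form $K_j = G_j \times (S^1\setminus I_j)$ would make pairwise intersections $K_i\cap K_j$ into products $(G_i\cap G_j)\times(\text{arc})$, and in the extreme case $G_i\cap G_j = \Sigma$ these become $\Sigma\times I$, which fails to be a $1$-handlebody as soon as $\Sigma$ is not a $2$-sphere. To circumvent this, the construction must interleave the $n$ pieces around $S^1$ with transitional ``collar'' regions carrying additional $G_I$-slabs, so that every pairwise intersection acquires the correct $1$-handle structure rather than remaining a bare product. Arranging the $n$ pieces $K_j$ against the $(n{-}1)$ pieces $G_i$ requires a genuinely cyclic, symmetric shuffling across $n$ arcs of $S^1$; I expect this to be the technical heart of the proof. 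Once such an arrangement is in place, the $\phi$-invariance of each $G_i$ ensures the construction closes up consistently around $S^1$, and the remaining checks reduce to routine dimension and handle counts inherited from the multisection of $F$.
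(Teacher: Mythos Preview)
You correctly identify the starting point---products $G_i\times(\text{arc})$ over a subdivision of $S^1$---and the obstruction---some intersections collapse to $\Sigma\times I$, which is not a $1$-handlebody. The gap is in your proposed repair. ``Interleaving with transitional collar regions carrying additional $G_I$-slabs'' is not specified and cannot work as stated: for $|I|\geq 2$ the slabs $G_I\times(\text{arc})$ have codimension $\geq 1$ and so cannot contribute to the top-dimensional $K_j$, while no reshuffling of the full pieces $G_i$ alone avoids the $\Sigma\times I$ problem. The paper's actual mechanism is different: after laying down the slabs $X_k\times I_\ell$ (which already makes each $W_i'$ a handlebody), one chooses over each arc $I_\ell$ a tube $B^n\times I_\ell$, where $B^n\subset X$ is a small ball in \emph{good position} (meeting every $X_J$ transversely in a ball of the appropriate dimension), and transfers this tube to the unique $W_i$ absent over $I_\ell$. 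These tubes are the $1$-handles joining the various product pieces inside every $W_I$, and it is precisely this that converts each intersection into a handlebody.

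Two further corrections follow from the tubing. The central surface is not a single copy of $\Sigma$ over one point: the tubes puncture each fibrewise copy of $\Sigma$, and $\bigcap_j K_j$ consists of $N$ punctured copies of $\Sigma$ joined cyclically by $N$ annuli, of genus $Ng+1$. And ``globally preserved'' in the paper's sense means $\phi$ permutes the $G_i$, not that it fixes each one; the arc-decomposition of $S^1$ (and hence the integer $N$) is dictated by the cycle structure of this permutation $\sigma$, so your reading $\sigma=\mathrm{id}$ covers only a special case.
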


The proofs are constructive. We show how to get a multisection diagram of the bundle, either from a suitable decomposition of the base in the case of surface bundles, or from a multisection diagram of the fiber in the case of bundles over $S^1$. 

As noted by Koenig in \cite{Koenig}, for a $4$--dimensional fiber bundle over $S^1$, the fiber always admits a Heegaard splitting preserved by the monodromy: if $\HH$ is a given Heegaard splitting of the fiber and $\varphi$ is the monodromy, then, by uniqueness of Heegaard splittings, $\HH$ and $\varphi(\HH)$ have a common stabilization; this stabilization of $\HH$ is preserved by the monodromy. This argument applies to $5$--dimensional fiber bundles over $S^1$, because a given $4$--manifold (here, the fiber) always admits a trisection, which is unique up to stabilization \cite{GayKirby}. In higher dimensions however, the question of uniqueness up to stabilization of the multisections is open.

It may be worth mentionning that other notions of multisections do exist, that should not be confused with the ones we consider here. In \cite{RuTi}, Rubinstein and Tillmann define multisections of PL manifolds, which coincide with Heegaard splittings in dimension 3 and with trisections in dimension 4. In higher dimensions however, these multisections differ from ours; in particular they cannot be represented by diagrams. Also, in \cite{IN}, Islambouli and Naylor introduce multisections of $4$--dimensional manifolds, which generalize trisections to decompositions with an arbitrary number of pieces.

Throughout the article, all manifolds are orientable. We work in the PL category, but our constructions of multisections also work in the smooth category, {\em ie} with smooth manifolds and smooth multisections, see Remark~\ref{rem:smooth}. The only result which does not adapt to the smooth setting in all dimensions is Proposition~\ref{prop:destabilization} and this is precised at that point.


\section{Multisections and diagrams} 
\label{sec:defs}

For $n\geq3$, an {\em $n$--dimensional handlebody} is a manifold that admits a handle decomposition with one $0$--handle and some $1$--handles; the number of $1$--handles is the {\em genus} of the handlebody.

A {\em multisection}, or {\em $n$--section}, of a closed connected $(n+1)$--manifold $W$ is a decomposition $W=\cup_{i=1}^nW_i$ where:
\begin{itemize}
 \item for any non-empty proper subset $I$ of $\{1,\dots,n\}$, the intersection $W_I=\cap_{i\in I}W_i$ is a submanifold of $W$ which is PL--homeomorphic to an $(n-|I|+2)$--dimensional handlebody,
 \item $\Sigma=\cap_{i=1}^nW_i$ is a closed connected surface.
\end{itemize}
A multisection is a {\em Heegaard splitting} when $n=2$, a {\em trisection} when $n=3$, a {\em quadrisection} when $n=4$. 
The {\em genus} of the multisection is the genus of the {\em central surface} $\Sigma$.

\begin{remark} \label{rem:smooth}
 In the smooth category, the $W_I$ cannot all be smooth submanifolds of $W$: some corners necessarily appear. This forces us to give a more detailed definition, see~\cite{BCGM}.
\end{remark}

Given a $3$--dimensional handlebody $H$ with boundary $\Sigma$, a {\em cut-system} for $H$ is a collection of disjoint simple closed curves on $\Sigma$ which bound disjoint properly embedded disks in $H$ such that the result of cutting $H$ along these disks is a $3$--ball. Recall that a cut-system is well defined up to handleslides \cite[Corollary~1.6]{Johannson}.
A {\em multisection diagram} for a multisection as above is a tuple $\big(\Sigma;(c_i)_{1\leq i\leq n}\big)$, where $\Sigma$ is the central surface of the multisection and $c_i$ is a cut-system on $\Sigma$ for the $3$--dimensional handlebody $\cap_{j\neq i}W_j$. By \cite[Theorem 3.2]{BCGM}, a multisection diagram determines a unique PL--manifold (and a unique smooth manifold up to dimension $6$).

The connected sum of multisected manifolds can be performed around points of the central surfaces, so as to produce a multisection of the resulting manifold. This gives a simple way to define stabilizations of multisections: a {\em stabilization} of a multisection of an $(n+1)$--manifold $W$ is the connected sum of the multisected $W$ with the standard sphere $S^{n+1}$ equipped with a genus--$1$ multisection; the latter are given diagrammatically in Figure~\ref{fig:gen1spheres}. These stabilization moves can also be described as cut-and-paste operations on the multisection of $W$, see \cite{BCGM}.

\begin{figure}[htb] 
\begin{center}
\begin{tikzpicture} [scale=0.6]
\begin{scope} [scale=1.2]
 \draw (0,0) .. controls +(0,1) and +(-1,0) .. (3,1.5) .. controls +(1,0) and +(0,1) .. (6,0);
 \draw (0,0) .. controls +(0,-1) and +(-1,0) .. (3,-1.5) .. controls +(1,0) and +(0,-1) .. (6,0);
 \draw (2,0) ..controls +(0.5,-0.25) and +(-0.5,-0.25) .. (4,0);
 \draw (2.3,-0.1) ..controls +(0.6,0.2) and +(-0.6,0.2) .. (3.7,-0.1);
 \draw (1,-2) node {$S^{n+1}$};
 \foreach \x in {2.5,3.5} {
 \draw (\x,-0.15) ..controls +(0.2,-0.5) and +(0.2,0.5) .. (\x,-1.45);
 \draw[dashed] (\x,-0.15) ..controls +(-0.2,-0.5) and +(-0.2,0.5) .. (\x,-1.5);}
 \draw (3.05,-0.85) node {$\dots$};
 \draw (3,-1.6) node {$\underbrace{\phantom{aaaa}}$} (3,-2.2) node {$k$};
 \draw (3,0)ellipse(1.6 and 0.6);
 \draw (3,0)ellipse(2.6 and 1.2);
 \draw (5.1,0) node {$\dots$};
 \draw (5.1,0.1) node {$\overbrace{\phantom{aaaa}}$}; 
 \draw (5.1,0.4) -- (6,1) (6.3,1.3) node {$n-k$};
\end{scope}
\begin{scope} [xshift=10cm]
 \draw (0,0) .. controls +(0,1) and +(-1,0) .. (3,1.5) .. controls +(1,0) and +(0,1) .. (6,0);
 \draw (0,0) .. controls +(0,-1) and +(-1,0) .. (3,-1.5) .. controls +(1,0) and +(0,-1) .. (6,0);
 \draw (2,0) ..controls +(0.5,-0.25) and +(-0.5,-0.25) .. (4,0);
 \draw (2.3,-0.1) ..controls +(0.6,0.2) and +(-0.6,0.2) .. (3.7,-0.1);
 \draw (6.5,-2) node {$S^5$};
 \draw[green] (3.2,-0.2) ..controls +(0.2,-0.5) and +(0.2,0.5) .. (3.2,-1.5);
 \draw[dashed,green] (3.2,-0.2) ..controls +(-0.2,-0.5) and +(-0.2,0.5) .. (3.2,-1.5);
 \draw[blue] (3,-0.2) ..controls +(0.2,-0.5) and +(0.2,0.5) .. (3,-1.5);
 \draw[dashed,blue] (3,-0.2) ..controls +(-0.2,-0.5) and +(-0.2,0.5) .. (3,-1.5);
 \draw[red] (2.8,-0.2) ..controls +(0.2,-0.5) and +(0.2,0.5) .. (2.8,-1.5);
 \draw[dashed,red] (2.8,-0.2) ..controls +(-0.2,-0.5) and +(-0.2,0.5) .. (2.8,-1.5);
 \draw[orange] (3,0)ellipse(1.6 and 0.8);
\end{scope}
\begin{scope} [xshift=17cm]
 \draw (0,0) .. controls +(0,1) and +(-1,0) .. (3,1.5) .. controls +(1,0) and +(0,1) .. (6,0);
 \draw (0,0) .. controls +(0,-1) and +(-1,0) .. (3,-1.5) .. controls +(1,0) and +(0,-1) .. (6,0);
 \draw (2,0) ..controls +(0.5,-0.25) and +(-0.5,-0.25) .. (4,0);
 \draw (2.3,-0.1) ..controls +(0.6,0.2) and +(-0.6,0.2) .. (3.7,-0.1);
 \draw[blue] (3.2,-0.2) ..controls +(0.2,-0.5) and +(0.2,0.5) .. (3.2,-1.5);
 \draw[dashed,blue] (3.2,-0.2) ..controls +(-0.2,-0.5) and +(-0.2,0.5) .. (3.2,-1.5);
 \draw[red] (3,-0.2) ..controls +(0.2,-0.5) and +(0.2,0.5) .. (3,-1.5);
 \draw[dashed,red] (3,-0.2) ..controls +(-0.2,-0.5) and +(-0.2,0.5) .. (3,-1.5);
 \draw[green] (3,0)ellipse(1.6 and 0.8);
 \draw[orange] (3,0)ellipse(1.8 and 1);
\end{scope}
\end{tikzpicture}
\caption{Genus--$1$ multisection diagrams of the spheres, where $0<k<n$} \label{fig:gen1spheres}
\end{center}
\end{figure}

\begin{proposition} \label{prop:destabilization}
 Let $W$ be an $(n+1)$--manifold with a multisection diagram $\big(\Sigma;(c_i)_{1\leq i\leq n}\big)$. Assume the diagram contains two groups of $k$ and $n-k$ parallel curves, with $0<k<n$, such that a curve of one group meets a curve of the other group in exactly one point. Then the associated multisection is the result of a stabilization.
\end{proposition}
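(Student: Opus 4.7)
The plan is to exhibit the multisected $W$ as a connected sum of a multisected manifold $W'$ and the standard sphere $S^{n+1}$ equipped with the genus--$1$ multisection of Figure~\ref{fig:gen1spheres}, so that the given multisection is, by definition, a stabilization of the one on $W'$. Since by \cite[Theorem 3.2]{BCGM} a multisection diagram determines a unique PL manifold, it suffices to perform the decomposition at the diagrammatic level.

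Since two curves in the same cut-system are never parallel (their complement would fail to be a ball), the $k$ parallel curves of the first group come from $k$ distinct cut-systems and the $n-k$ of the second from $n-k$ distinct cut-systems; as a curve of each group intersects a curve of the other, these index sets are also disjoint, so up to relabelling they are $\{1,\dots,k\}$ and $\{k+1,\dots,n\}$, with $\gamma_i\in c_i$ for $i\leq k$ and $\delta_j\in c_j$ for $j>k$. A regular neighborhood $T$ of $\gamma_1\cup\delta_n$ is then a once-punctured torus whose boundary $\partial T$ separates $\Sigma$, and all the $\gamma_i$, $\delta_j$ isotope into $T$. I next handleslide the remaining curves out of $T$: for $i\leq k$, any other curve in $c_i$ is disjoint from $\gamma_i$ and meets $T$ in arcs lying in the pair of pants $T\setminus\gamma_i$; such arcs are either inessential or parallel, rel.\ $\partial T$, to an arc that goes once around one of the two copies of $\gamma_i$, and in both cases they can be removed by isotopies and by handleslides over $\gamma_i$ within the cut-system $c_i$. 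The analogous argument applies to $c_j$ for $j>k$ using $\delta_j$.

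After these moves, capping $T$ along $\partial T$ with a disk produces a closed torus carrying precisely a standard genus--$1$ multisection diagram of $S^{n+1}$ from Figure~\ref{fig:gen1spheres}, while capping $\Sigma\setminus\Int(T)$ produces a closed surface $\Sigma'$ carrying the remaining curves, which form a multisection diagram of some manifold $W'$ by \cite[Theorem 3.2]{BCGM}. The diagram on $\Sigma$ is the connected sum of these two, so the multisected $W$ is the connected sum of the multisected $W'$ with the multisected $S^{n+1}$, i.e.\ a stabilization. The main obstacle is the handleslide step: one must verify that arcs of the remaining cut-system curves sitting inside $T$ can be simultaneously pushed out using only operations within each individual $c_i$, which reduces to the classification of properly embedded arcs in the pair of pants $T\setminus\gamma_i$ (resp.\ $T\setminus\delta_j$) together with the fact that a slide over the distinguished curve preserves the cut-system property.
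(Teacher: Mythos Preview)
Your proof is correct and takes essentially the same approach as the paper: distribute the special curves one per cut-system, handleslide the remaining curves in each $c_i$ over the distinguished $\gamma_i$ or $\delta_j$ to push them out of the punctured torus $T$, and conclude that the diagram splits as a connected sum with a genus--$1$ sphere diagram. The paper phrases the handleslide step as ``slide along the dual curve to reduce intersections with the $\alpha$-- (resp.\ $\beta$--) curves'' rather than via the classification of arcs in the pair of pants $T\setminus\gamma_i$, but the content is the same.
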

\begin{proof}
 Denote by $\alpha_1,\dots,\alpha_k$ and $\beta_{k+1},\dots,\beta_n$ the two groups of curves. Since two curves in the same collection $c_i$ have to be disjoint but not parallel, there is one of these curves in each $c_i$. If a curve of the diagram, other than the $\beta$--curves, meets the $\alpha$--curves, then it is in the same collection $c_i$ as one of the $\beta$--curves, so that it can be slid along this $\beta$--curve until it gets disjoint from the $\alpha$--curves. The same is true for a curve, other than the $\alpha$--curves, that would meet the $\beta$--curves. Hence, after handleslides, we can assume that the $\alpha$--curves and $\beta$--curves are disjoint from any other curve of the diagram. Now a regular neighborhood of the $\alpha$--curves and the $\beta$--curves in $\Sigma$ is a punctured torus. It implies that the diagram is a connected sum of a lower-genus diagram with a diagram of Figure~\ref{fig:gen1spheres}. Since the multisected manifold is determined by the diagram, it is the result of a stabilization.
\end{proof}

We will use this proposition in Section~\ref{sec:bundlesS1} to simplify some multisection diagrams. We may note that, for a smooth manifold, this proof works up to dimension $6$ only: the point is that a multisection diagram determines a unique smooth manifold up to dimension $6$ only, so that the last sentence of the proof does not apply in higher dimensions.

\section{Multisecting surface bundles}
\label{sec:surfacebundles}

Surface bundles turn out to admit multisections in any dimension. A multisection of a surface bundle can be obtained from a suitable decomposition of the base into balls. This generalizes the work of Gay--Kirby \cite{GayKirby} and Williams \cite{Williams} on trisections of $4$--dimensional surface bundles.

Let $M$ be a closed $d$--manifold. We call {\em good ball decomposition} of the manifold $M$ a decomposition $M=\cup_{i=0}^dM_i$ where, for all non-empty $I\subset\{0,\dots,d\}$, $\cap_{i\in I}M_i$ is a disjoint union of embedded $(d-|I|+1)$--balls. Such decompositions do exist. 

\begin{lemma}
 Any closed manifold admits a good ball decomposition.
\end{lemma}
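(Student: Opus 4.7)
The plan is to build the decomposition from the dual cell structure of a properly $(d{+}1)$-colored triangulation of $M$. First, pick any PL combinatorial triangulation $T$ of $M$ and pass to its barycentric subdivision: the vertices of the subdivision are the barycenters of simplices of the original triangulation, and coloring each such vertex by the dimension of the corresponding simplex yields a proper vertex coloring $c \colon V(T) \to \{0,1,\dots,d\}$, since every simplex of the subdivision corresponds to a strictly increasing chain. After relabeling, one may assume that $T$ itself carries such a $(d{+}1)$-coloring.

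Let $T'$ denote the barycentric subdivision of this colored $T$, and for each simplex $\sigma$ of $T$ let $\bar D(\sigma)$ be its closed dual cell in $T'$, a PL $(d-\dim\sigma)$-ball since $T$ is a combinatorial manifold. The natural candidate for the decomposition is
\[
M_i \;=\; \bigcup_{v \in V(T),\, c(v) = i} \bar D(v), \qquad i \in \{0, \dots, d\}.
\]
Because $c$ is proper, any two distinct vertices of the same color fail to be joined by an edge of $T$, so the dual cells $\bar D(v)$ with $c(v) = i$ are pairwise disjoint and $M_i$ is a disjoint union of $d$-balls. Moreover $\bigcup_i M_i = M$, since the top-dimensional dual cells cover $M$.

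What remains is the classical intersection identity for dual cells: if $v_0,\dots,v_k$ are vertices of $T$ that span a simplex $\sigma$, then $\bar D(v_0)\cap\dots\cap \bar D(v_k) = \bar D(\sigma)$; otherwise the intersection is empty. Granting this, properness of $c$ yields, for any non-empty $I \subset \{0,\dots,d\}$,
\[
\bigcap_{i \in I} M_i \;=\; \bigsqcup_{\sigma:\, c(\sigma) = I} \bar D(\sigma),
\]
a disjoint union of PL $(d-|I|+1)$-balls, which is what the definition requires (the union is disjoint because two distinct simplices with the same vertex-color set cannot lie in a common simplex of $T$). The main technical input is thus the standard fact that dual cells of a combinatorial triangulation of a PL manifold are PL balls of complementary dimension, a consequence of regular neighborhood theory; everything else is bookkeeping with the coloring.
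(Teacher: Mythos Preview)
Your argument is correct and coincides with the paper's construction: the paper defines $M_i$ as the union of closed stars, in the second barycentric subdivision $\TT^2$, of the barycenters of the $i$--faces of $\TT$, and the closed star in $\TT^2$ of a vertex of $\TT^1$ is precisely its closed dual cell, so your $M_i$ (with $T=\TT^1$ and the dimension coloring) agrees with the paper's. You spell out the dual-cell intersection identity where the paper simply declares the intersection condition ``direct''.
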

\begin{proof}
 Let $M$ be a closed $d$--manifold and $\TT$ a triangulation of $M$. We consider the first and second barycentric subdivisions $\TT^1$ and $\TT^2$ of $\TT$. For $i\in\{0,\dots,d\}$, let $V_i$ be the set of barycenters of $i$--faces of $\TT$; note that the union of the $V_i$ is the set of vertices of $\TT^1$. We define $M_i$ as the union of the stars of the vertices of $V_i$ in $\TT^2$, see Figure~\ref{figGBD}. 
 We may note that $M_i$ is the union of $i$--handles in the handle decomposition of $M$ associated with the triangulation $\TT$ (see for instance \cite[Proposition~6.9]{RS}). The condition on the intersections is direct. 
\end{proof}

\begin{figure}[htb]
\begin{center}
\begin{tikzpicture} [yscale=0.87,scale=0.5]
\foreach \s in {-1,1} {
\begin{scope} [xscale=\s]
 \draw (0,0) -- (6,0) -- (0,12) -- (0,0);
 \draw (6,0) -- (0,4) -- (3,6);
 \draw (0,0) -- (3,2) -- (3,6) -- (0,8) (0,2) -- (6,0) -- (1.5,5) -- (0,12) (3,0) -- (0,4) -- (4.5,3) (0,4) -- (1.5,9);
 \draw[red!60,fill=red!60,opacity=0.5] (3,0) -- (6,0) -- (4.5,3) -- (3,3.33) -- (3,2) -- (2,1.33) -- (3,0) (0,12) -- (0,8) -- (1,7.33) -- (1.5,9) -- (0,12);
 \draw[vert!60,fill=vert!60,opacity=0.5] (0,0) -- (3,0) -- (2,1.33) -- (0,2) -- (0,0) (1.5,9) -- (1,7.33) -- (1.5,5) -- (3,3.33) -- (4.5,3) -- (1.5,9);
 \draw[blue!60,fill=blue!60,opacity=0.5] (0,2) -- (2,1.33) -- (3,2) -- (3,3.33) -- (1.5,5) -- (1,7.33) -- (0,8) -- (0,2);
\end{scope}}
 \foreach \x/\y in {-6/0,6/0,0/12}
 \draw[red] (\x,\y) node {$\bullet$};
 \draw[red] (6,0) node[right] {$V_0$};
 \draw[red!60] (6,2) node {$M_0$};
 \foreach \x/\y in {0/0,3/6,-3/6}
 \draw[vert] (\x,\y) node {$\bullet$};
 \draw[vert] (3,6) node[right] {$V_1$};
 \draw[vert!60] (3,8) node {$M_1$};
 \draw[blue] (0,4) node {$\bullet$} node[right] {$V_2$};
 \draw[blue!60] (0,6.5) node {$M_2$};
\end{tikzpicture} \caption{For $d=2$, decomposition of a $2$--simplex into the $M_i$} \label{figGBD}
\end{center}
\end{figure}

Let $p:W\to M$ be a surface bundle. Taking the preimage of a good ball decomposition of~$M$, we get a decomposition of $W$ into pieces that are products of a surface, the fibre, with balls. To obtain a multisection from this, we dig some ``disk tunnels'' into the different pieces, which we add to other pieces.
If $N\subset M$ is a disjoint union of contractible subspaces of~$M$, a {\em disk section of $p$ over $N$} is a submanifold $Z$ of $W$ such that $p^{-1}(x)\cap Z$ is a $2$--disk for all $x\in N$, and $p(Z)=N$.

\begin{theorem} \label{th:SurfaceBundle}
 Fix $n>1$. Let $p:W\to M$ be a surface bundle, where $W$ is a closed $(n+1)$--manifold. Assume we are given a good ball decomposition $M=\cup_{i=1}^nM_i$. Fix pairwise disjoint disk sections $Z_i$ of $p$ over $M_i$, for $1\leq i\leq n$. Set $W_i=\left(\overline{p^{-1}(M_i)\setminus Z_i}\right)\cup Z_{i+1}$, where the indices are considered modulo $n$. Then $W=\cup_{i=1}^nW_i$ is a multisection.
\end{theorem}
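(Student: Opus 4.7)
My plan is to verify the multisection axioms directly by analyzing the sets $W_I = \bigcap_{i \in I} W_i$.

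First, $W = \bigcup_i W_i$ is immediate: given $x \in W$, choose $i$ with $p(x) \in M_i$; if $x \notin Z_i$ then $x \in W_i$, and otherwise $x \in Z_i = Z_{(i-1)+1} \subset W_{i-1}$ (indices taken cyclically modulo $n$, so in particular $Z_{n+1} = Z_1$).

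For the intersections, since the $Z_k$ are pairwise disjoint, each $x \in W$ lies in at most one of them; classifying points by $J(x) = \{j : p(x) \in M_j\}$ together with which (if any) $Z_k$ contains $x$ and whether in its interior or on its boundary, one determines explicitly the set $\{i : x \in W_i\}$. This yields, for each $I$, a description of $W_I$ as a union of pieces, each lying over one ball-component $B$ of some stratum $M_J$ of the good ball decomposition. Trivializing $p^{-1}(B) \cong \Sigma \times B$, the disk sections $Z_k$ with $k \in J$ appear as disjoint $2$-disks $D_k \subset \Sigma$, and each piece of $W_I$ over $B$ takes the product form $S \times B$ for an explicit planar subsurface $S \subset \Sigma$ depending on $I$, $J$ and the disks $D_k$.

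The main step is then to assemble the local product pieces into a global handle decomposition. The good ball decomposition endows $M$ with a cell structure whose cells are the ball-components of the strata $M_J^\circ$, and the pieces of $W_I$ are glued to one another along the faces of this cell structure. For proper non-empty $I$, the pieces over top-dimensional cells serve as $0$-handles of dimension $n-|I|+1$, while face-adjacencies produce $1$-handle attachments, giving $W_I$ the structure of an $(n-|I|+1)$-dimensional handlebody. For $I = \{1, \dots, n\}$, the same analysis specializes to show that the intersection consists of: over each point of $\bigcap_i M_i$, a copy of $\Sigma$ with $n$ disjoint open disks removed; and, over each $1$-stratum $M_{\{1,\dots,n\}\setminus\{k-1\}}$ for $k = 1, \dots, n$, an annulus $\partial D_k \times (\text{arc})$. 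These pieces glue along their boundary circles into a closed surface, connected since $M$ is.

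I expect the main difficulty to be this assembly step: verifying that all the inter-piece gluings amount to attachments of $0$- and $1$-handles, with no higher-index handles appearing, so that each $W_I$ is really a handlebody in the sense of the paper (and that everything is connected, reflecting the connectedness of $M$). This reduces to a combinatorial check based on the intersection pattern of the $M_I$'s in the good ball decomposition.
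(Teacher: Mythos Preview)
Your plan is essentially the paper's proof. The paper organizes the analysis via an explicit formula (obtained by induction on $|I|$, writing the fiber as $S$ and $Z_i$ locally as $M_i\times D_i$):
\[
W_I=\Big(M_I\times\overline{S\setminus\textstyle\bigcup_{i\in I}D_i}\Big)\ \cup\ \bigcup_{\substack{i\in I\\ i+1\notin I}}\big(M_{I\cup\{i+1\}\setminus\{i\}}\times D_{i+1}\big)\ \cup\ \bigcup_{\substack{i\in I\\ i+1\in I}}\big(M_{I\setminus\{i\}}\times\partial D_{i+1}\big),
\]
and then checks that the first term consists of handlebodies while the second and third attach as $1$-handles; connectedness is deduced from the CW structure the good ball decomposition puts on $M$. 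One slip in your outline: the subsurface appearing over $M_I$ is the fiber with $|I|$ disks removed, which is \emph{not} planar when the fiber has positive genus (you state this correctly for the central surface), so those pieces are already handlebodies rather than single $0$-handles, and the $\partial D_{i+1}$-pieces are solid tori rather than balls. This is precisely the assembly subtlety you anticipate, and the three-term formula above is what makes it go through cleanly.
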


\begin{proof}
 Set $M_I=\cap_{i\in I}M_i$. 
 We slightly abuse notation by denoting $Z_i=M_i\times D_i$, where $D_i$ is a disk in the fiber $S$ above each point of $M_i$, indeed depending on this point. In this way, $W_i$ can be written as 
 $$W_i=\left(M_i\times\overline{S\setminus D_i}\right)\cup\big( M_{i+1}\times D_{i+1}\big).$$
 More generally, for $I\subset\{1,\dots,n\}$, one can check by induction on $|I|$ that:
 \begin{align*}
  W_I=&\ 
  \left(M_I\times\overline{S\setminus\cup_{i\in I}D_i}\right) 
  \cup\bigcup_{\substack{i\in I\\i+1\notin I}}\left(M_{I\cup\{i+1\}\setminus\{i\}}\times D_{i+1}\right)
  \cup\bigcup_{\substack{i\in I\\i+1\in I}}\left(M_{I\setminus\{i\}}\times\partial D_{i+1}\right). 
 \end{align*}
 We fix a non-empty proper subset $I$ of $\{1,\dots,n\}$ and check that $W_I$ is a handlebody. 
 In the first term, each connected component is a thickening of a surface with non-empty boundary, thus a handlebody. The second term adds $1$--handles to these handlebodies; indeed, $M_I$ and $M_{I\cup\{i+1\}\setminus\{i\}}$ are made of $(n-|I|)$--balls that intersect along $(n-|I|-1)$--balls. For $i\in I$ such that $i+1\in I$, $M_{I\setminus\{i\}}$ is made of $(n-|I|+1)$--balls and contains in its boundary the $(n-|I|)$--balls composing~$M_I$. Hence each component of $M_{I\setminus\{i\}}\times\partial D_{i+1}$ is a $D^{n-|I|+1}\times S^1$ glued along some $D^{n-|I|}\times S^1$, with $D^{n-|I|}$ living in the boundary of $D^{n-|I|+1}$, to some handlebodies in $M_I\times\overline{S\setminus\cup_{i\in I}D_i}$. This has the effect to glue together some of the latter handlebodies along a $1$--handle corresponding to the boundary component $\partial D_{i+1}$ of $\overline{S\setminus\cup_{i\in I}D_i}$. Finally, $W_I$ is made of handlebodies and it remains to check that it is connected. 
 
 Thanks to the above formula for $W_I$, it is enough to check that the different connected components of $M_I$ are connected by some number of paths, each contained in $M_{I\cup\{i+1\}\setminus\{i\}}$ (which is contained in $M_{I\setminus\{i\}}$) for some $i\in I$. Note that the connectedness of $M$ implies that $\cup_{|I|=n-1}M_I$ is connected (the good ball decomposition provides a CW--complex structure where the $k$--skeleton is $\cup_{|I|=n-k}M_I$). Hence two components of $M_I$ are always connected by a path in $\cup_{|I|=n-1}M_I$, and each interval of this path which is not in $M_I$ is a component of $M_{\{1,\dots,n\}\setminus\{i\}}$ for some $i\in I$, which is contained in $M_{I\cup\{i+1\}\setminus\{i\}}$.
 
 Finally, the central piece $\Sigma=W_{\{1,\dots,n\}}$ is given by
 $$\Sigma=\left(M_{\{1,\dots,n\}}\times\overline{S\setminus\cup_{i=1}^nD_i}\right)\cup\bigcup_{i=1}^n\left(M_{\{1,\dots,n\}\setminus\{i\}}\times\partial D_{i+1}\right).$$
 The first term is made of copies of $S$ with the open $D_i$ removed, while the second term is made of tubes joining the boundary components of these copies. Hence $\Sigma$ is a closed surface.
\end{proof}

\begin{remark}
 The multisection obtained in the theorem has genus $vg+e-v+1$, where $v$ is the number of points in $M_{\{1,\dots,n\}}$, $e$ is the number of intervals in $\cup_{|I|=n-1}M_I$, and $g$ is the genus of the fiber. 
\end{remark}

We now present some examples. When the bundle is simply a product, we define the disk sections as $Z_i=M_i\times D_i$, where the $D_i$ are disjoint disks on the fiber.

We start with bundles over spheres. The standard sphere admits the good ball decomposition given by the following lemma, see Figure~\ref{figdecspheres}. 

\begin{lemma} \label{lemmaDecomposeSphere}
 The $(n-1)$--sphere admits a good ball decomposition $S^{n-1}=\cup_{i=1}^nB_i$ where $\cap_{i\in I}B_i$ is a single $(n-|I|)$--ball for $I\subsetneq\{1,\dots,n\}$ and $\cap_{i=1}^nB_i$ is made of two points.
\end{lemma}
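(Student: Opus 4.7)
The plan is to realise $S^{n-1}$ as the boundary of the $n$-cube $C = [0,1]^n$ and to build the pieces from pairs of its facets. Writing $F_j^{\varepsilon} = \{x \in C : x_j = \varepsilon\}$ for $\varepsilon \in \{0,1\}$, I set
\[
 B_i = F_i^1 \cup F_{i+1}^0, \qquad i = 1, \dots, n,
\]
with indices taken modulo $n$. Each $B_i$ is the union of two $(n{-}1)$-cubes glued along the common $(n{-}2)$-face $\{x_i = 1,\ x_{i+1} = 0\}$, and is therefore a PL $(n{-}1)$-ball; the union of the $B_i$ covers $\partial C \cong S^{n-1}$ because every boundary point has some coordinate in $\{0,1\}$.

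For $I \subseteq \{1, \dots, n\}$ one reads off directly that
\[
 \bigcap_{i\in I} B_i \;=\; \bigl\{\, x \in \partial C : \forall\, i \in I,\ x_i = 1 \text{ or } x_{i+1} = 0 \,\bigr\}.
\]
For a proper $I$, I would use the cyclic symmetry of the construction to assume $n \notin I$ and decompose $I$ into its maximal runs of consecutive integers $J_1 \sqcup \dots \sqcup J_r \subseteq \{1, \dots, n-1\}$. Distinct runs are separated by at least one missing index, so the blocks of variables involved in their constraints are disjoint, and the intersection factors as a product $Y_1 \times \dots \times Y_r \times [0,1]^{n - |I| - r}$, where $Y_\ell$ collects the solutions for the run $J_\ell$. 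For a single run $J_\ell = \{a, \dots, b\}$, the incompatibility of ``$x_{i+1} = 0$'' at step $i$ with ``$x_{i+1} = 1$'' at step $i{+}1$ forces any solution to have a unique \emph{split point} $j \in \{a{-}1, \dots, b\}$ with $x_a = \dots = x_j = 1$, $x_{j+2} = \dots = x_{b+1} = 0$, and $x_{j+1}$ free in $[0,1]$; the $|J_\ell|+1$ resulting edges glue consecutively along single vertices into an arc, so $Y_\ell$ is a PL $1$-ball. The product then has PL dimension $r + (n - |I| - r) = n - |I|$, as required.

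The case $I = \{1, \dots, n\}$ is the main conceptual obstacle, since the cyclic wrap-around breaks the split-point argument, but it is resolved directly: if a solution has some $x_j > 0$, then the constraint of index $j-1$ forces $x_{j-1} = 1$, and iterating once around the cycle forces $x = (1, \dots, 1)$; otherwise $x = (0, \dots, 0)$. Hence $\bigcap_{i=1}^n B_i = \{(0, \dots, 0),\ (1, \dots, 1)\}$, a pair of points, which completes the verification.
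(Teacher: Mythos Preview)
Your construction is correct and gives a valid good ball decomposition of $S^{n-1}$, but it follows a genuinely different route from the paper's. The paper projects $S^{n-1}\subset\R^n$ onto the unit ball $B^{n-1}\subset\R^{n-1}$ by forgetting the last coordinate, views $B^{n-1}$ as an $(n-1)$--simplex, decomposes it into the $n$ cones from its barycentre over its $n$ facets, and pulls this back; the intersection of any proper subcollection of cones is the cone over a single face of the simplex, and since that face lies in $\partial B^{n-1}$ the preimage is a single ball, while the barycentre pulls back to two points. Your approach instead models $S^{n-1}$ as the boundary of the $n$--cube and takes each $B_i$ to be a cyclically adjacent pair of facets $F_i^1\cup F_{i+1}^0$. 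This is more combinatorial and yields fully explicit coordinate descriptions of every $\cap_{i\in I}B_i$, at the cost of the run decomposition and split--point analysis; the paper's argument is terser and more geometric but leaves the verification of the intersection conditions essentially to the reader. One small quibble: the split point $j$ is not literally unique at the vertices where two consecutive edges of $Y_\ell$ meet, but since you immediately describe $Y_\ell$ as those $|J_\ell|+1$ edges glued end to end into an arc, the conclusion is unaffected.
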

\begin{proof}
 Consider the map $\varphi:S^{n-1}\subset\R^{n}\to\R^{n-1}$ sending $(x_1,\dots,x_n)$ to $(x_1,\dots,x_{n-1})$. Its image $B^{n-1}$ can be viewed as an $(n-1)$--simplex and cut into the cones with vertex its center and bases its faces. The pull-back of this decomposition provides the required decomposition of~$S^{n-1}$.
\end{proof}

\begin{figure}[htb] 
\begin{center}
\begin{tikzpicture}
\begin{scope} [scale=0.4]
 \draw (0,0) circle (2);
 \foreach \s in {-1,1} {\draw (0,2*\s) node {$\bullet$};}
 \draw (0,-4.5) node {$S^1$};
\end{scope}
\begin{scope}[xshift=4cm,scale=0.6]
 \draw (0,0) circle (2);
 \foreach \s/\d in {-1/,1/dashed} {
 \draw (0,2*\s) node {$\bullet$};
 \draw[\d] (-2,0) .. controls +(1,0.8*\s) and +(-1,0.8*\s) .. (2,0);
 \draw[very thick] (0,-2) .. controls +(1.8*\s,1) and +(1.8*\s,-1) .. (0,2);}
 \draw[very thick,dashed] (0,-2) .. controls +(-0.5,1) and +(-0.5,-1) .. (0,2);
 \draw (0,-3) node {$S^2$};
\end{scope}
\begin{scope}[xshift=9cm,scale=0.6]
 \draw[gray!60,fill=gray!60] (-4,-1) -- (2.5,-1) -- (4,1) -- (-2.5,1) -- (-4,-1);
 \draw[fill=gray!20,opacity=0.8] (0,-2) arc (-90:90:2) .. controls +(-0.8,-1) and +(-0.8,1) .. (0,-2);
 \draw[fill=gray!45,opacity=0.8] (0,2) arc (90:270:2) .. controls +(0.8,1) and +(0.8,-1) .. (0,2);
 \draw[dashed] (-2,0) .. controls +(1,0.8) and +(-1,0.8) .. (2,0);
 \draw[dashed,fill=gray!80,opacity=0.8] (0,-2) .. controls +(0.8,1) and +(0.8,-1) .. (0,2) .. controls +(-0.8,-1) and +(-0.8,1) .. (0,-2);
 \draw (0,-2) .. controls +(0.8,1) and +(0.8,-1) .. (0,2);
 \draw[gray!60,fill=gray!60,opacity=0.8] (-2,0) .. controls +(1,-0.8) and +(-1,-0.8) .. (2,0) -- (2,-1) -- (-2,-1) -- (-2,0);
 \draw (-2,0) .. controls +(1,-0.8) and +(-1,-0.8) .. (2,0);
 \foreach \s in {-1,1} {
 \draw[scale=\s] (0.55,-0.55) node {$\bullet$};}
 \draw (0,-3) node {$S^3$};
\end{scope}
\end{tikzpicture}
\caption{Good ball decomposition of $S^k$ for small $k$} \label{figdecspheres}
\end{center}
\end{figure}

\begin{corollary}
 A surface bundle over $S^{n-1}$ with fiber a closed surface of genus $g$ admits a multisection of genus $2g+n-1$. 
\end{corollary}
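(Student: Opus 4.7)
The plan is to invoke Theorem~\ref{th:SurfaceBundle} with the good ball decomposition of $S^{n-1}$ supplied by Lemma~\ref{lemmaDecomposeSphere}, and then read off the genus from the formula stated in the Remark following that theorem.

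First I would write $S^{n-1}=\cup_{i=1}^nB_i$ as in Lemma~\ref{lemmaDecomposeSphere}, so that $B_I=\cap_{i\in I}B_i$ is a single $(n-|I|)$--ball for every proper non-empty $I\subset\{1,\dots,n\}$ and $B_{\{1,\dots,n\}}$ consists of two points. To apply the theorem I need pairwise disjoint disk sections $Z_i$ of $p$ over the $B_i$. Since every $B_I$ is contractible, the restricted bundles $p^{-1}(B_I)\to B_I$ are all trivial, so I can construct the $Z_i$ stratum by stratum: first pick $n$ pairwise disjoint disks in each of the two fibres over the points of $B_{\{1,\dots,n\}}$, then extend them over each $1$--ball of $\cup_{|I|=n-1}B_I$ keeping them disjoint, and continue until disjoint disk sections $Z_i$ are defined on all of $B_i$. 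Each extension step is unobstructed because the stratum is contractible and I only need to realize a prescribed configuration of disjoint disks on the fibre.

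Applying Theorem~\ref{th:SurfaceBundle} then produces a multisection of $W$, whose genus is given by the formula $vg+e-v+1$ from the Remark. Here $v$ is the number of points of $B_{\{1,\dots,n\}}$, hence $v=2$, and $e$ is the number of arcs in $\cup_{|I|=n-1}B_I$. Since each of the $\binom{n}{n-1}=n$ subsets $I$ of size $n-1$ contributes exactly one $1$--ball by Lemma~\ref{lemmaDecomposeSphere}, we get $e=n$. The genus is therefore $2g+n-2+1=2g+n-1$, as claimed.

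The only non-cosmetic step is the construction of the disjoint disk sections, and that is a standard bundle-extension argument over contractible strata; once it is in hand, the corollary reduces to a direct application of Theorem~\ref{th:SurfaceBundle} combined with the small combinatorial count above.
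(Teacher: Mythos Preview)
Your proposal is correct and follows the same route as the paper: the corollary is stated there without an explicit proof, being an immediate consequence of Theorem~\ref{th:SurfaceBundle} applied to the good ball decomposition of Lemma~\ref{lemmaDecomposeSphere}, together with the genus formula from the Remark. Your combinatorial count $v=2$, $e=n$ matches the paper's description of the central surface as two copies of the fiber joined by $n$ tubes, and your extra care in building the disjoint disk sections stratum by stratum is a reasonable justification of a point the paper leaves implicit.
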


If $W$ is a surface bundle over $S^{n-1}$, the associated multisection has a central surface given by two copies of the fiber (the preimages of $\cap_{i=1}^nB_i$) joined by a tube above each $\cap_{j\neq i}B_j$. 
Examples of the associated diagram are given in Figures~\ref{figS2bundles} and~\ref{figS1S1S3}. The cut-system associated to $W_{\{1,\dots,n\}\setminus\{i\}}$ is obtained as follows. A first curve is given by a meridian curve around the tube above $\cap_{j\neq i-1}B_j$. Then take a family of properly embedded arcs on $S\setminus\cup_{j\neq i} D_j$ that cut it into a disk, and build simple closed curves on $\Sigma$ by joining the two copies of these arcs on the two copies of the punctured $S$ by parallel arcs on the tubes.

\begin{figure}[htb] 
\begin{center}
\begin{tikzpicture} [scale=0.45]
\begin{scope}
 \draw[rounded corners=50pt] (0.5,0) -- (0.5,-5) -- (15.5,-5) -- (15.5,5) -- (0.5,5) -- (0.5,0);
 \foreach \x in {4,8,12} {
 \draw (\x+0.5,2) .. controls +(-0.5,-0.6) and +(0,1) .. (\x-0.5,0) .. controls +(0,-1) and +(-0.5,0.6) .. (\x+0.5,-2);
 \draw (\x,1.4) .. controls +(0.5,-0.6) and +(0,0.3) .. (\x+0.5,0) .. controls +(0,-0.3) and +(0.5,0.6) .. (\x,-1.4);}
 \foreach \x/\c in {0/vert,4/purple,8/blue,12/orange} {
 \draw[color=\c] (\x+0.5,0) .. controls +(1,-0.5) and +(-1,-0.5) .. (\x+3.5,0);
 \draw[color=\c,dashed] (\x+0.5,0) .. controls +(1,0.5) and +(-1,0.5) .. (\x+3.5,0);}
 \foreach \x/\c in {4/orange,8/vert,12/purple} {
 \draw[color=\c] (\x,0) ellipse (1.5 and 3);}
 \foreach \x/\c in {4/blue,8/orange,12/vert} {
 \draw[color=\c] (\x,0) ellipse (1.8 and 3.4);}
 \foreach \x/\c in {0/purple,4/blue} {
 \draw[rounded corners=40pt,color=\c] (\x+2,0) -- (\x+2,-4) -- (\x+10,-4) -- (\x+10,4) -- (\x+2,4) -- (\x+2,0);}
\end{scope}
\begin{scope} [xshift=15.75cm]
 \draw[rounded corners=50pt] (0.5,0) -- (0.5,-5) -- (19.5,-5) -- (19.5,5) -- (0.5,5) -- (0.5,0);
 \foreach \x in {4,8,12,16} {
 \draw (\x+0.5,2) .. controls +(-0.5,-0.6) and +(0,1) .. (\x-0.5,0) .. controls +(0,-1) and +(-0.5,0.6) .. (\x+0.5,-2);
 \draw (\x,1.4) .. controls +(0.5,-0.6) and +(0,0.3) .. (\x+0.5,0) .. controls +(0,-0.3) and +(0.5,0.6) .. (\x,-1.4);}
 \foreach \x/\c in {0/vert,4/aqua,8/blue,12/red,16/orange} {
 \draw[color=\c] (\x+0.5,0) .. controls +(1,-0.5) and +(-1,-0.5) .. (\x+3.5,0);
 \draw[color=\c,dashed] (\x+0.5,0) .. controls +(1,0.5) and +(-1,0.5) .. (\x+3.5,0);}
 \foreach \x/\c in {4/orange,8/vert,12/aqua,16/blue} {
 \draw[color=\c] (\x,0) ellipse (1.5 and 3.1);}
 \foreach \x/\c in {4/blue,8/red,12/orange,16/vert} {
 \draw[color=\c] (\x,0) ellipse (1.7 and 3.4);}
 \foreach \x/\c in {4/red,8/orange,12/vert,16/aqua} {
 \draw[color=\c] (\x,0) ellipse (1.3 and 2.7);}
 \foreach \x/\c in {0/aqua,4/blue,8/red} {
 \draw[rounded corners=40pt,color=\c] (\x+2.1,0) -- (\x+2.1,-4) -- (\x+9.9,-4) -- (\x+9.9,4) -- (\x+2.1,4) -- (\x+2.1,0);}
\end{scope}
\end{tikzpicture}
\caption{Multisection diagrams for $S^2\times S^3$ and $S^2\times S^4$} \label{figS2bundles}
\end{center}
\end{figure}

\begin{figure}[htb] 
\begin{center}
\begin{tikzpicture} [xscale=0.8,yscale=0.6]
 \draw (8,0) ellipse (7.5 and 6.5);
 \foreach \x in {4,8,12} {
 \draw (\x+0.5,2) .. controls +(-0.5,-0.6) and +(0,1) .. (\x-0.5,0) .. controls +(0,-1) and +(-0.5,0.6) .. (\x+0.5,-2);
 \draw (\x,1.4) .. controls +(0.5,-0.6) and +(0,0.3) .. (\x+0.5,0) .. controls +(0,-0.3) and +(0.5,0.6) .. (\x,-1.4);}
 \foreach \y in {-4.5,4.5} {
 \draw (6,\y+0.5) .. controls +(0.6,-0.5) and +(-1,0) .. (8,\y-0.5) .. controls +(1,0) and +(-0.6,-0.5) .. (10,\y+0.5);
 \draw (6.6,\y) .. controls +(0.6,0.5) and +(-0.3,0) .. (8,\y+0.5) .. controls +(0.3,0) and +(-0.6,0.5) .. (9.4,\y);}
 \foreach \x/\c in {0/vert,4/purple,8/blue,12/orange} {
 \draw[color=\c] (\x+0.5,0) .. controls +(1,-0.5) and +(-1,-0.5) .. (\x+3.5,0);
 \draw[color=\c,dashed] (\x+0.5,0) .. controls +(1,0.5) and +(-1,0.5) .. (\x+3.5,0);}
 \foreach \x/\c in {4/orange,8/vert,12/purple} {
 \draw[color=\c] (\x,0) ellipse (1.2 and 2.7);}
 \foreach \x/\c in {4/blue,8/orange,12/vert} {
 \draw[color=\c] (\x,0) ellipse (1.4 and 3);}
 \foreach \x/\c in {0/purple,4/blue} {
 \draw[rounded corners=40pt,color=\c] (\x+2.4,0) -- (\x+2.4,-3.3) -- (\x+9.6,-3.3) -- (\x+9.6,3.3) -- (\x+2.4,3.3) -- (\x+2.4,0);
 \draw[color=\c] (8,0) ellipse (7.1+\x/20 and 6.1+\x/20);}
 \foreach \x/\c in {0/orange,0.1/vert} {
 \draw[rounded corners=30pt,color=\c] (8,5.6+\x) -- (10.4+\x,5.6+\x) -- (10.4+\x,-5.6-\x) -- (5.6-\x,-5.6-\x) -- (5.6-\x,5.6+\x) -- (8,5.6+\x); }
 \foreach \s/\c in {1/purple,-1/blue} {
 \draw[color=\c] (8+1.2*\s,4.35) .. controls +(\s,-1) and +(0,1) .. (8+2.2*\s,0) .. controls +(0,-1) and +(\s,1) .. (8+1.2*\s,-4.35);
 \draw[color=\c,dashed] (8+1.2*\s,4.35) .. controls +(0.6*\s,-1) and +(0,1) .. (8+2.1*\s,0) .. controls +(0,-1) and +(0.6*\s,1) .. (8+1.2*\s,-4.35);}
 \foreach \s/\c in {-1/orange,1/vert} {
 \draw[color=\c] (8+\s,4.22) .. controls +(0.8*\s,-1) and +(0,1) .. (8+1.9*\s,0) .. controls +(0,-1) and +(0.8*\s,1) .. (8+\s,-4.22);
 \draw[color=\c,dashed] (8+\s,4.22) .. controls +(0.5*\s,-1) and +(0,1) .. (8+1.8*\s,0) .. controls +(0,-1) and +(0.5*\s,1) .. (8+\s,-4.22);}
\end{tikzpicture}
\caption{Quadrisection diagram for $S^1\times S^1\times S^3$} \label{figS1S1S3}
\end{center}
\end{figure}

In dimension $6$, we get infinitely many $6$--manifolds admitting a $5$--section of genus $4$, namely the $S^2$--bundles over~$S^4$. Such a bundle can be constructed by gluing two copies of $S^2\times B^4$ via a map $\varphi :S^3\to SO(3)$. We write $S^3$ as the quotient of $S^2\times [0,2\pi]$ by the shrinking of $S^2\times\{0\}$ and $S^2\times \{2\pi\}$; for $m\in\Z$, we define a map $\varphi_m$ that sends $(x,t)\in S^2\times [0,2\pi]$ onto the rotation of axis given by $x$ and angle $mt$. This defines an $S^2$--bundle $W(m)$. While $W(-m)$ is diffeomorphic to $W(m)$, the group $\pi_3\big(W(m)\big)$ is finite of order $|m|$, which shows that the $W(m)$ for $m\in\N$ are non-diffeomorphic (the order of $\pi_3\big(W(m)\big)$ can be computed from the homotopy exact sequence of the fibration). To get a simple multisection diagram of $W(m)$, it appears helpful to modify the map $\varphi_m$ by a homotopy. We write $S^3$ as $S^2\times [-1,2\pi]$ with $S^2\times\{-1\}$ and $S^2\times \{2\pi\}$ shrunk, and we define $\varphi_m$ as previously on $S^2\times[0,2\pi]$ and constant equal to the identity on $S^2\times[-1,0]$. Taking a good ball decomposition $S^4=\cup_{1\leq i\leq 5}B_i$ as in Lemma~\ref{lemmaDecomposeSphere}, we set $B_a=B_1\cup B_2$ and $B_b=B_3\cup B_4\cup B_5$ and we choose the parametrizations of $S^3$ as the boundary of $B_a$ and $B_b$ so that:
\[
\begin{minipage}{5cm}
 $S^3\cap\partial B_1=\big(S^2\times[-1,0]\big)/\sim$\\[0.1cm]
 $S^3\cap\partial B_2=\big(S^2\times[0,2\pi]\big)/\sim$
\end{minipage}
\hspace{1.5cm}
\begin{minipage}{5cm}
 $S^3\cap\partial B_3=\big(\Delta_3\times[-1,2\pi]\big)/\sim$\\[0.1cm]
 $S^3\cap\partial B_4=\big(\Delta_4\times[-1,2\pi]\big)/\sim$\\[0.1cm]
 $S^3\cap\partial B_5=\big(\Delta_5\times[-1,2\pi]\big)/\sim$
\end{minipage}
\]
where $S^2=\Delta_3\cup\Delta_4\cup\Delta_5$ is a good ball decomposition of $S^2$ as in Figure~\ref{figdecspheres}. Now the bundle $W(m)$ is given by the gluing of $S^2\times B_a$ and $S^2\times B_b$ via the map $\varphi_m:\partial B_a=\partial B_b\to SO(3)$. We choose $D_1$ and $D_2$ as small neighborhoods of the two points in $\Delta_3\cap\Delta_4\cap\Delta_5$, and $D_3\subset\Delta_4$, $D_4\subset\Delta_5$, $D_5\subset\Delta_3$ disjoint from the image of $D_1\cup D_2$ by all the $\varphi_m(x)\in SO(3)$ acting on~$S^2$. This gives explicit disk sections, and a careful analysis of the gluing locus, where all the $3$--dimensional handlebodies of the multisection lie, provides the diagram in Figure~\ref{figS2bundleS4}. The only $3$--dimensional piece where the gluing is non-trivial is $W_{2345}$, represented in green.

\begin{figure}[htb] 
\begin{center}
\begin{tikzpicture} [scale=0.6]
 \draw[rounded corners=50pt] (0.5,0) -- (0.5,-5) -- (19.5,-5) -- (19.5,5) -- (0.5,5) -- (0.5,0);
 \foreach \x in {4,8,12,16} {
 \draw (\x+0.5,2) .. controls +(-0.5,-0.6) and +(0,1) .. (\x-0.5,0) .. controls +(0,-1) and +(-0.5,0.6) .. (\x+0.5,-2);
 \draw (\x,1.4) .. controls +(0.5,-0.6) and +(0,0.3) .. (\x+0.5,0) .. controls +(0,-0.3) and +(0.5,0.6) .. (\x,-1.4);}
 \foreach \x/\c in {0/vert,4/aqua,8/blue,12/red,16/orange} {
 \draw[color=\c,dashed] (\x+0.5,0) .. controls +(1,-0.5) and +(-1,-0.5) .. (\x+3.5,0);
 \draw[color=\c] (\x+0.5,0) .. controls +(1,0.5) and +(-1,0.5) .. (\x+3.5,0);}
 \foreach \x/\c in {4/orange,8/red,12/aqua,16/blue} {
 \draw[color=\c] (\x,0) ellipse (1.5 and 3.1);}
 \foreach \x/\c in {4/blue,12/orange,16/vert} {
 \draw[color=\c] (\x,0) ellipse (1.7 and 3.4);}
 \foreach \x/\c in {4/red,8/orange,12/vert,16/aqua} {
 \draw[color=\c] (\x,0) ellipse (1.3 and 2.7);}
 \foreach \x/\c in {0/aqua,4/blue,8/red} {
 \draw[rounded corners=40pt,color=\c] (\x+2.1,0) -- (\x+2.1,-4) -- (\x+9.9,-4) -- (\x+9.9,4) -- (\x+2.1,4) -- (\x+2.1,0);}
 \draw[vert] (8,-3.4) .. controls +(1,0) and +(0,-2) .. (9.7,0) .. controls +(0,2) and +(1,0) .. (8,3.4) .. controls +(-0.8,0) and +(0,1.2) .. (6.4,1) .. controls +(0,-0.7) and +(0.4,0.2) .. (4.5,-0.2)
 (7.5,-0.2) .. controls +(-1,0.5) and +(1,0.5) .. (4.5,-0.4)
 (7.5,-0.4) .. controls +(-1,0.5) and +(1,0.5) .. (4.45,-0.6)
 (7.55,-0.6) .. controls +(-1,0.5) and +(1,0.5) .. (4.37,-0.8)
 (7.63,-0.8) .. controls +(-1,0.5) and +(-1.5,0) .. (8,-3.4) ;
 \draw[vert,dashed] (4.5,-0.2) .. controls +(1,-0.5) and +(-1,-0.5) .. (7.5,-0.2) (4.5,-0.4) .. controls +(1,-0.5) and +(-1,-0.5) .. (7.5,-0.4) (4.45,-0.6) .. controls +(1,-0.5) and +(-1,-0.5) .. (7.55,-0.6) (4.37,-0.8) .. controls +(1,-0.5) and +(-1,-0.5) .. (7.63,-0.8);
\end{tikzpicture}
\caption{Multisection diagram for the $S^2$--bundle $W(2)$ over $S^4$\\{\footnotesize For a diagram of $W(m)$, the green curve that differs from the diagram of $S^2\times S^4$ has to turn $2m$ times.}} \label{figS2bundleS4}
\end{center}
\end{figure}

It is an interesting open question to ask whether one can deduce from the diagram that the $W(m)$ are non-diffeomorphic for distinct non-negative values of $m$.

We now treat surface bundles over $S^2\times S^1$. 

\begin{corollary}
 A surface bundle over $S^2\times S^1$ admits a quadrisection of genus $8g+9$, where $g$ is the genus of the fiber.
\end{corollary}
\begin{proof}
 We use the following good ball decomposition of $S^2\times S^1$. The factor $S^2$ is cut into two disks; each of these disks product $S^1$ is then cut into two balls, see Figure~\ref{figS2xS1GBD}. In this decomposition $S^2\times S^1=\cup_{1\leq i\leq 4}M_i$, each $M_i$ is a $3$--ball, each $M_{ij}$ is made of two $2$--disks, each $M_{ijk}$ is made of four intervals and $M_{1234}$ contains exactly eight points. In the associated quadrisection of a surface bundle over $S^2\times S^1$, the central surface is made of $8$ copies of the fiber joined by $16$ tubes.
\end{proof}

\begin{figure}[htb] 
\begin{center}
\begin{tikzpicture}
 \foreach \x/\y/\z/\t/\h/\b in {0/0/4/0/violet/blue,0/-1/4/0/violet/blue,1/0/4/0/violet/blue,1/-1/3/41.5/violet/blue,2/0/2/60/violet/blue,2/-1/1/75.5/violet/blue,3/0/0/90/violet/blue,3/-1/-1/104.5/violet/blue,4/0/-2/120/violet/blue,4/-1/-3/138.5/violet/blue,5/0/-4/180/violet/blue,5/-1/-4/180/violet/blue,6/0/-4/180/violet/blue,6/-1/-4/180/violet/blue,7/0/4/0/blue/violet,7/-1/3/41.5/blue/violet,8/0/2/60/blue/violet,8/-1/1/75.5/blue/violet,9/0/0/90/blue/violet,9/-1/-1/104.5/blue/violet,10/0/-2/120/blue/violet,10/-1/-3/138.5/blue/violet,11/0/-4/180/blue/violet,11/-1/-4/180/blue/violet} {
 \draw[fill=\h!70] (1.2*\x-0.6*\y,\y +\z/8) -- (1.2*\x-0.6*\y,\y+0.5) arc (90:90+\t:0.5) -- (1.2*\x-0.6*\y,\y +\z/8);
 \draw[fill=\b!70] (1.2*\x-0.6*\y,\y +\z/8) -- (1.2*\x-0.6*\y,\y-0.5) arc (270:90+\t:0.5) -- (1.2*\x-0.6*\y,\y +\z/8);}
 \foreach \x/\y/\z/\t/\h/\b in {0/0/0/90/green/red,0/-1/1/104.5/green/red,1/0/2/120/green/red,1/-1/3/138.5/green/red,2/0/4/180/green/red,2/-1/4/180/green/red,3/0/-4/0/red/green,3/-1/-4/0/red/green,4/0/-4/0/red/green,4/-1/-3/41.5/red/green,5/0/-2/60/red/green,5/-1/-1/75.5/red/green,6/0/0/90/red/green,6/-1/1/104.5/red/green,7/0/2/120/red/green,7/-1/3/138.5/red/green,8/0/4/180/red/green,8/-1/4/180/red/green,9/0/4/180/red/green,9/-1/4/180/red/green,10/0/-4/0/green/red,10/-1/-3/41.5/green/red,11/0/-2/60/green/red,11/-1/-1/75.5/green/red} {
 \draw[fill=\h!70] (1.2*\x-0.6*\y,\y +\z/8) -- (1.2*\x-0.6*\y,\y+0.5) arc (90:\t-90:0.5) -- (1.2*\x-0.6*\y,\y +\z/8);
 \draw[fill=\b!70] (1.2*\x-0.6*\y,\y +\z/8) -- (1.2*\x-0.6*\y,\y-0.5) arc (-90:-90+\t:0.5) -- (1.2*\x-0.6*\y,\y +\z/8);}
 \foreach \x in {0,...,11}
 \foreach \y in {0,-1} {
 \draw (1.2*\x-0.6*\y,\y) circle (0.5);
 \foreach \a in {-0.5,0.5} 
 \draw[->] (1.2*\x-0.6*\y +\a,\y-0.05) -- (1.2*\x-0.6*\y +\a,\y);
 \foreach \b in {-0.5,0.5} 
 \draw (1.2*\x-0.6*\y,\b +\y) node {$\scriptscriptstyle\bullet$};}
 \foreach \x in {0,2,...,22} 
 \draw (0.6*\x-0.45,0.5) node {$\scriptscriptstyle\x$};
 \foreach \x in {1,3,...,23} 
 \draw (0.6*\x-0.45,-1.5) node {$\scriptscriptstyle\x$};
\end{tikzpicture}
\caption{Good ball decomposition of $S^2\times S^1$: the $S^2$--slices\\{\footnotesize Here, $S^1$ is regarded as $[0,24]/(0=24)$. We represent $M_1$ in blue, $M_2$ in violet, $M_3$ in red and $M_4$ in green.}} \label{figS2xS1GBD}
\end{center}
\end{figure}

\begin{figure}[htb] 
\begin{center}
\begin{tikzpicture}
 \draw[rounded corners=5pt] (0,1.5) -- (3.9,1.5) -- (4.5,0.9) -- (4.5,-0.9) -- (3.9,-1.5) -- (-3.9,-1.5) -- (-4.5,-0.9) -- (-4.5,0.9) -- (-3.9,1.5) -- (0,1.5);
 \foreach \t in {0,180} {
 \draw[rounded corners=2pt,rotate=\t] (2.5,2.6) -- (2.5,3.9) -- (2.7,4) -- (4,2.7) -- (3.9,2.5) -- (-3.9,2.5) -- (-4,2.7) -- (-2.7,4) -- (-2.5,3.9) -- (-2.5,2.6) (2.5,1.4) -- (2.5,-1.4);
 \draw[rounded corners=5pt,rotate=\t](1.5,1.4) -- (1.5,-1.4) (1.5,2.6) -- (1.5,3.9) -- (0.9,4.5) -- (-0.9,4.5) -- (-1.5,3.9) -- (-1.5,2.6);}
 \foreach \t in {0,90,180,270} {
 \draw[rounded corners=4pt,rotate=\t] (4,4) -- (4.8,3.2) -- (5.2,3.2) .. controls +(0,1) and +(1,0) .. (3.2,5.2) -- (3.2,4.8) -- (4,4);
 \draw[rounded corners=5pt,rotate=\t] (5.5,0) -- (5.5,0.9) -- (6,2.7) .. controls +(0.5,2) and +(2,0.5) .. (2.7,6) -- (0.9,5.5) -- (0,5.5);}
 \foreach \t/\c in {0/green,90/red,180/blue,270/violet} {
 \draw[\c,rotate=\t] (3.35,3.35) .. controls +(-0.2,0.3) and +(-0.3,0.2) .. (4,4);
 \draw[\c,rotate=\t,dashed] (3.35,3.35) .. controls +(0.3,-0.2) and +(0.2,-0.3) .. (4,4);
 }
 \foreach \t/\c in {0/violet,90/green,180/red,270/blue} {
 \draw[\c,rotate=\t] (4.6,4.6) .. controls +(-0.2,0.3) and +(-0.3,0.2) .. (5.3,5.3);
 \draw[\c,rotate=\t,dashed] (4.6,4.6) .. controls +(0.3,-0.2) and +(0.2,-0.3) .. (5.3,5.3);
 }
 \foreach \x/\c in {-5.5/violet,-2.5/violet,1.5/red,4.5/red} {
 \draw[\c] (\x,0) .. controls +(0.3,0.3) and +(-0.3,0.3) .. (\x+1,0);
 \draw[\c,dashed] (\x,0) .. controls +(0.3,-0.3) and +(-0.3,-0.3) .. (\x+1,0);
 }
 \foreach \y/\c in {-5.5/green,-2.5/green,1.5/blue,4.5/blue} {
 \draw[\c] (0,\y) .. controls +(0.3,0.3) and +(0.3,-0.3) .. (0,\y+1);
 \draw[\c,dashed] (0,\y) .. controls +(-0.3,0.3) and +(-0.3,-0.3) .. (0,\y+1);
 }
 \draw[green,rounded corners=5pt] (-4.3,-1.1) -- (-4.6,-1) -- (-4.6,1) -- (-4,1.7) -- (4,1.7) -- (4.1,1.3) (5.8,2) -- (5.9,2.8) .. controls +(0.4,1.8) and +(1.8,0.4) .. (2.8,5.9) -- (1,5.25) -- (-0.4,5.5) (-0.4,4.5) -- (-1,4.75) -- (-2.1,4) .. controls +(-0.2,-0.2) and +(0,0.5) .. (-2.3,2.6) (-2.3,1.4) -- (-2.3,-1.4) (-2.3,-2.6) -- (-2.3,-4) -- (-2.8,-4.15) -- (-4.15,-2.8) -- (-3.98,-2.7)
 (-5.75,1.9) -- (4,1.9) -- (4.6,1) -- (4.75,-1) -- (4.2,-1.2) (3.95,-2.6) -- (5.5,-3) .. controls +(0.2,-1.4) and +(1.4,-0.2) .. (3,-5.5) -- (2.1,-4) -- (2.1,-2.6) (2.1,-1.4) -- (2.1,1.4) (2.1,2.6) -- (2.1,4) -- (2.6,3.95) (3.2,5) -- (1,4.9) -- (-1,5.4) -- (-2,5.8) (-3.2,4.9) -- (-3.1,4.7) -- (-4.5,3) -- (-5,3.2)
 (3.9,-3.9) -- (4.7,-3.1) -- (5.3,-3.1) .. controls +(0.1,-1.2) and +(1.2,-0.1) .. (3.1,-5.3) -- (3.1,-4.7) -- (3.9,-3.9)
 (-0.2,5.5) -- (1,5.4) -- (2.6,5.97) (3.3,5.2) -- (2.3,5.5) -- (1,5.1) -- (-0.6,5.5) (-0.6,4.5) -- (-1,4.6) -- (-2,4) -- (-1.5,3.8) (-2.5,3.8) -- (-2.2,4) -- (-1,4.9) -- (-0.2,4.5)
 (-1.2,-4.2) -- (-2.9,-4.3) -- (-4.3,-2.9) -- (-3.95,-2.6) (-4.2,-1.2) -- (-5.7,-2.9) .. controls +(-0.3,-1.6) and +(-1.6,-0.3) .. (-2.9,-5.7) -- (-1.6,-5.7);
 \draw[green,dashed] (4.1,1.3) -- (5.8,2) (-4.3,-1.1) -- (-3.98,-2.7)
 (4.2,-1.2) -- (3.95,-2.6) (2.6,3.95) -- (3.2,5) (-2,5.8) -- (-3.2,4.9) (-5,3.2) -- (-5.75,1.9)
 (2.6,5.97) -- (3.3,5.2) (-2.5,3.8) .. controls +(0.3,-0.3) and +(-0.3,-0.3) .. (-1.5,3.8)
 (-3.95,-2.6) -- (-4.2,-1.2) (-1.6,-5.7) -- (-1.2,-4.2);
 \foreach \x in {-0.2,-0.4,-0.6}
 \draw[green,dashed,xshift=\x cm] (0,4.5) .. controls +(-0.3,0.3) and +(-0.3,-0.3) .. (0,5.5);
 \draw[violet,rounded corners=5pt] 
 (-1,-4.4) -- (-3,-4.5) -- (-4.5,-3) -- (-4.2,-2.6) -- (-3.9,-2.55) (-4,-1.4) -- (-5.5,-3) .. controls +(-0.2,-1.4) and +(-1.4,-0.2) .. (-3,-5.5) -- (-1.25,-5.6)
 (3.7,4.3) -- (4.6,3.05) -- (5.1,1) -- (5.25,-1) -- (4,-1.75) -- (-4,-2.25) -- (-5.3,-3.1) .. controls +(-0.1,-1.2) and +(-1.2,-0.1) .. (-3.1,-5.3) -- (-1,-5.4) -- (-0.5,-5.5) (-0.5,-4.5) -- (1,-4.6) -- (1.7,-4) -- (1.7,-2.6) (1.7,-1.4) -- (1.7,1.4) (1.7,2.6) -- (1.7,4) -- (2.6,4.3) -- (3,3.7)
 (-5.85,2.1) -- (4,2.1) -- (4.75,1) -- (4.9,-1) -- (4,-1.4) (3.87,-2.51) -- (5.7,-2.9) .. controls +(0.3,-1.6) and +(1.6,-0.3) .. (2.9,-5.7) -- (1.9,-4) -- (1.9,-2.6) (1.9,-1.4) -- (1.9,1.4) (1.9,2.6) -- (1.9,4) -- (2.5,4.2) -- (2.7,3.98) (3.2,4.9) -- (1,4.75) -- (-1,5.25) -- (-2.35,5.9) (-3.2,5.1) -- (-3,4.5) -- (-4.3,2.9) -- (-4.8,3) -- (-5.1,3.2)
 (-3.52,3.55) -- (-4.15,2.8) -- (-4.6,2.8) -- (-5.3,3.1) .. controls +(-0.1,1.2) and +(-1.2,0.1) .. (-3.1,5.3) -- (-2.9,4.3) -- (-3.52,3.55)
 (4.4,3.6) -- (4.7,3.1) -- (5.25,1) -- (5.4,-1) -- (4,-1.9) -- (3.6,-2.5) (3.6,-1.5) -- (4,-1.6) -- (5.1,-1) -- (4.9,1) -- (4.5,3) -- (3.7,3);
 \draw[violet,dashed] (-3.9,-2.55) -- (-4,-1.4) (-1.25,-5.6) -- (-1,-4.4)
 (3,3.7) .. controls +(0.3,-0.2) and +(0.2,-0.3) .. (3.7,4.3) (-0.5,-5.5) .. controls +(-0.3,0.3) and +(-0.3,-0.3) .. (-0.5,-4.5)
 (4,-1.4) -- (3.87,-2.51) (2.7,3.98) -- (3.2,4.9) (-2.35,5.9) -- (-3.2,5.1) (-5.1,3.2) -- (-5.85,2.1)
 (3.7,3) .. controls +(0.3,-0.2) and +(0.2,-0.3) .. (4.4,3.6) (3.6,-2.5) .. controls +(-0.3,0.3) and +(-0.3,-0.3) .. (3.6,-1.5);
 \draw[blue,rounded corners=5pt] 
 (-5.72,1.7) -- (-5.4,1) -- (-5.4,-1) -- (-5.9,-2.8) .. controls +(-0.4,-1.8) and +(-1.8,-0.4) .. (-2.8,-5.9) -- (-2,-5.8) (-1.3,-4.1) -- (-1.7,-4) -- (-1.7,-2.6) (-1.7,-1.4) -- (-1.7,1.4) (-1.7,2.6) .. controls +(0,0.4) and +(-0.2,-0.2) .. (-1.5,3.2) (-2.5,3.2) -- (-2.3,4) -- (-2.8,4.15) -- (-3,3.7) (-3.7,4.3) -- (-4.65,3.1) -- (-4.8,3.25)
 (-0.5,-2.5) .. controls +(0.4,0.2) and +(-1,0) .. (4,-2.4) -- (4.3,-2.9) -- (2.9,-4.3) -- (1,-4.75) -- (-1,-4.6) -- (-1.9,-4) -- (-1.9,-2.6) (-1.9,-1.4) -- (-1.9,1.4) (-1.9,2.6) .. controls +(0,0.5) and +(-0.1,-0.2) .. (-1.6,4) -- (-1.3,4.1) (-1.6,5.69) -- (-2.8,5.85) .. controls +(-1.8,0.4) and +(-0.4,1.8) .. (-5.85,2.8) -- (-5.1,1) -- (-5.1,-1) -- (-4,-1.9) .. controls +(0.5,0) and +(-0.4,-0.2) .. (-0.5,-1.5)
 (-3.5,-1.5) -- (-4,-1.75) -- (-4.9,-1) -- (-4.9,1) -- (-5.8,2.85) -- (-5.2,3.3) (-6.03,2.8) -- (-5.9,2.75) -- (-5.25,1) -- (-5.25,-1) -- (-4,-2.1) -- (-3.5,-2.5)
 (2.5,3.8) -- (2.3,3.9) -- (2.3,2.6) (2.3,1.4) -- (2.3,-1.4) (2.3,-2.6) -- (2.3,-4) -- (2.8,-4.15) -- (4.15,-2.8) -- (4,-2.7) (4.3,-1.1) -- (4.6,-1) .. controls +(0,0.3) and +(0.1,-0.2) .. (4.5,0.5) (5.5,0.5) -- (5.4,1) -- (5.2,2.2) -- (5.35,3.1) .. controls +(0.1,1.2) and +(1.2,0.1) .. (3.1,5.35) -- (3.2,5.1)
 (5.55,1.1) -- (5.4,2.2) -- (5.5,3) .. controls +(0.2,1.4) and +(1.4,0.2) .. (3,5.5) -- (3.1,4.65) -- (4.3,2.9) -- (4.4,1)
 ;
 \draw[blue,dashed] 
 (-2,-5.8) -- (-1.3,-4.1) (-2.5,3.2) .. controls +(0.3,-0.3) and +(-0.3,-0.3) .. (-1.5,3.2) (-3,3.7) .. controls +(0.2,0.3) and +(0.3,0.2) .. (-3.7,4.3) (-4.8,3.25) -- (-5.72,1.7)
 (-0.5,-2.5) .. controls +(-0.3,0.3) and +(-0.3,-0.3) .. (-0.5,-1.5) (-1.3,4.1) -- (-1.6,5.69)
 (-3.5,-2.5) .. controls +(-0.3,0.3) and +(-0.3,-0.3) .. (-3.5,-1.5) (-5.2,3.3) -- (-6.03,2.8)
 (4,-2.7) .. controls +(0.4,0.2) and +(0,-0.5) .. (4.3,-1.1) (4.5,0.5) .. controls +(0.3,-0.3) and +(-0.3,-0.3) .. (5.5,0.5) (3.2,5.1) .. controls +(-0.3,0) and +(-0.3,0.3) .. (2.5,3.8)
 (4.4,1) -- (5.55,1.1)
 ;
 \draw[red,rounded corners=5pt] 
 (-1,-2.5) .. controls +(0.4,0.3) and +(-1,0) .. (4,-2.25) -- (4.5,-3) -- (3,-4.5) -- (1,-4.9) -- (-1,-4.75) -- (-2.1,-4) -- (-2.1,-2.6) (-2.1,-1.4) -- (-2.1,1.4) (-2.1,2.6) .. controls +(0,0.5) and +(-0.1,-0.2) .. (-1.8,4) -- (-1.1,4.3) (-1.3,5.6) -- (-2.9,5.7) .. controls +(-1.6,0.3) and +(-0.3,1.6) .. (-5.7,2.9) -- (-4.75,1) -- (-4.75,-1) -- (-4,-1.6) .. controls +(0.5,0) and +(-0.4,-0.2) .. (-1,-1.5)
 (-3.87,-3.87) -- (-4.65,-3.1) -- (-4.3,-2.5) -- (-4,-2.4) -- (0,-2.2) -- (4,-2.1) -- (5.9,-2.8) .. controls +(0.4,-1.8) and +(1.8,-0.4) .. (2.8,-5.85) -- (1,-5.25) -- (-1,-5.1) -- (-3.1,-4.65) -- (-3.87,-3.87)
 (0,2.3) -- (4,2.3) -- (4.1,2.8) -- (2.9,4.3) --(1,4.6) -- (-1,5.1) -- (-3,5.5) .. controls +(-1.4,0.2) and +(-0.2,1.4) .. (-5.5,3) -- (-4,2.3) -- (0,2.3)
 (5.67,1.5) -- (5.6,2.2) -- (5.7,2.9) .. controls +(0.3,1.6) and +(1.6,0.3) .. (2.9,5.7) -- (3,4.5) -- (4.2,2.8) -- (4.2,1.2)
 (-3.2,-5) -- (-1,-5.25) -- (1,-5.4) -- (3,-6.07) (3.2,-5.1) -- (2.5,-5.5) -- (1,-5.1) -- (-1,-4.9) -- (-2.3,-4.6) -- (-2.7,-3.98)
 ;
 \draw[red,dashed] 
 (-1,-2.5) .. controls +(-0.3,0.3) and +(-0.3,-0.3) .. (-1,-1.5) (-1.1,4.3) -- (-1.3,5.6)
 (4.2,1.2) -- (5.67,1.5)
 (3,-6.07) -- (3.2,-5.1) (-2.7,-3.98) -- (-3.2,-5)
 ;
 \draw[green,thick,->,opacity=0.2] (6.5,-5) -- (5.1,-4.3);
 \draw[green,thick,->,opacity=0.2] (6.7,5) -- (5.7,4.6);
 \draw[green,thick,->,opacity=0.2] (-6.8,-4.1) -- (-5.8,-3.8);
 \draw[green,thick,->,opacity=0.2] (6.7,-4) -- (5.55,-3.8);
 \draw[green,thick,->,opacity=0.2] (1,6) -- (1.2,5.5);
 \draw[green,thick,opacity=0.4] (6.5,-5.1) node[right] {$\partial\delta_1$}
 (6.7,5) node[right] {$\partial\delta_2$}
 (-6.8,-4.1) node[left] {$\partial\delta_3$}
 (6.7,-4) node[right] {$\partial\delta_4$}
 (1,6) node[above] {$\partial\delta_5$};
\end{tikzpicture}
\caption{Quadrisection diagram for $S^2\times S^2\times S^1$\\ {\footnotesize The cut system representing $W_{234}$, $W_{134}$, $W_{124}$ and $W_{123}$ is in blue, violet, red and green respectively.}} \label{figS2S2S1}
\end{center}
\end{figure}

In Figure~\ref{figS2S2S1}, we give a quadrisection diagram for $S^2\times S^2\times S^1$. The surface diagram is the central surface of the quadrisection, made of $8$ $2$--spheres, one above each point of $M_{1234}$, joined by $16$ tubes. The $8$ points of $M_{1234}$ appear on the slices $3$, $9$, $15$ and $21$ of Figure~\ref{figS2xS1GBD}, two points on each slice, and the corresponding $2$--spheres are drawn on Figure~\ref{figS2S2S1} respectively at the upper-right, upper-left, lower-left and lower-right corners. The tubes are given by the $\partial D_j\times M_{j,j+1,j+2}$, where $1\leq j\leq 4$ and each $M_{j,j+1,j+2}$ is made of $4$ arcs; the tubes corresponding to $j=1,2,3,4$ are circled by a meridian of color blue, violet, red and green  respectively on Figure~\ref{figS2S2S1}.

To draw the diagram curves on the \mbox{genus--$9$} surface, we need to determine, for each $3$--dimensional handlebody of the quadrisection, a family of $9$ properly embedded disks that cut the handlebody into a $3$--ball. We have 
$$W_{123}=\Big(M_{123}\times\big(S^2\setminus(D_1\cup D_2\cup D_3)\big)\Big)\cup\Big(M_{23}\times\partial D_2\Big)\cup\Big(M_{13}\times\partial D_3\Big)\cup\Big(M_{412}\times D_4\Big).$$
From Figure~\ref{figS2xS1GBD}, it can be read that $M_{23}$ (resp. $M_{13}$) is made of two disks, viewed as one bigon and one hexagon regarding the repartition of their boundaries along $M_{123}$ and $M_{234}$ (resp. $M_{123}$ and $M_{341}$). 
Our $9$ disks in $W_{123}$ are as follows (see Figure~\ref{figdisks} for the second and third points):
\begin{itemize} \itemsep=0cm
 \item $4$ meridional disks in the four solid tubes composing $D_4\times M_{412}$,
 \item a disk $\delta_1$ obtained as the union of:
 \begin{itemize}
  \item the product of a point in $\partial D_2$ and the bigon in $M_{23}$,
  \item the product of an arc joining $\partial D_1$ to $\partial D_2$ on $S^2$ and the relevant interval in $M_{123}$ (the one on the boundary of the bigon in $M_{23}$),
 \end{itemize}
 \item a disk $\delta_2$ obtained as the union of:
 \begin{itemize}
  \item the product of a point in $\partial D_2$ and the hexagon in $M_{23}$,
  \item the product of an arc joining $\partial D_1$ to $\partial D_2$ on $S^2$ and the relevant three intervals in~$M_{123}$,
 \end{itemize}
 \item disks $\delta_3$ and $\delta_4$ analogous to $\delta_1$ and $\delta_2$ with $\partial D_3$ instead of $\partial D_2$ and $M_{13}$ instead of $M_{23}$,
 \item a disk $\delta_5$ obtained as the product of an arc joining $\partial D_1$ to itself around $\partial D_2$ and an interval in $M_{123}$ (different from the ones corresponding to the bigons).
\end{itemize}
Figure~\ref{figS2xS2xS1} indicates which curves on the diagram represent the boundaries of the five disks $\delta_i$.

\begin{figure}[htb] 
\begin{center}
\begin{tikzpicture} 
\begin{scope}[scale=0.5]
 \foreach \s in {-1,1} 
 \draw[xscale=\s] (0,0) arc (90:70:10) arc (180:160:10) arc (70:90:11.8);
 \draw (-2.9,0) node {$S^2$};
 \foreach \s/\i in {-1/1,1/2}
 \draw[fill=white] (2*\s,1.5) circle (0.6) node {$\scriptstyle{D_\i}$};
 \draw (-1.5,1.8) node {$\scriptscriptstyle\bullet$} .. controls +(1,0.5) and +(-1,0.5) .. (1.5,1.8) node {$\scriptscriptstyle\bullet$}
 (0,2.1) node[above] {$\scriptstyle\gamma$}
 (-1.1,1.6) node {$\scriptstyle{*_1}$}
 (1.1,1.6) node {$\scriptstyle{*_2}$};
\end{scope}
\begin{scope}[xshift=4cm,scale=0.5]
 \draw[fill=green!40] (0,3) -- (0,0) .. controls +(0.6,0.8) and +(-1,-0.2) .. (4,2) -- (4,5) .. controls +(-1,-0.2) and +(0.6,0.8) .. (0,3);
 \draw[fill=green!60] (0,0) .. controls +(2,0) and +(4,0) .. (4,2) .. controls +(-1,-0.2) and +(0.6,0.8) .. (0,0);
 \draw[green,thick,opacity=0.2] (0,0) .. controls +(2,0) and +(4,0) .. (4,2) -- (4,5) .. controls +(-1,-0.2) and +(0.6,0.8) .. (0,3) -- (0,0);
 \draw[->] (1,-1) node[below] {{\small bigon in $M_{23}$}} -- (2.5,1);
 \draw[->] (5,0) node[below] {{\small interval in $M_{234}$}} -- (4,0.9);
 \draw[->] (5.5,4) node[above] {{\small interval}} (6.5,3.8) node {{\small in $M_{123}$}} (5,4) -- (3.5,1.9);
 \draw[<->] (-0.3,1.5) node[left] {$\scriptstyle{\gamma}$} (-0.3,0) node[left] {$\scriptstyle{*_2}$} -- (-0.3,3) node[left] {$\scriptstyle{*_1}$};
\end{scope}
\begin{scope}[xshift=9.6cm,yshift=0.4cm,scale=0.5]
 \draw[fill=green!40] (1.8,0.5) -- (1.8,3.5) -- (0,3) -- (0,0)
 (4.2,0.5) -- (4.2,3.5) -- (6,3) -- (6,0);
 \draw[fill=green!60] (0,0) -- (1.8,0.5) -- (4.2,0.5) -- (6,0) -- (4,-0.5) -- (2,-0.5) -- (0,0);
 \draw[green,thick,opacity=0.2] (0,0) -- (0,3) -- (1.8,3.5) -- (1.8,0.5) -- (4.2,0.5) -- (4.2,3.5) -- (6,3) -- (6,0) -- (4,-0.5) -- (4,2.5) -- (2,2.5) -- (2,-0.5) -- (0,0);
 \draw[->] (1,-2) node[below] {{\small hexagon in $M_{23}$}} -- (1.3,0);
 \draw[->] (4.5,-1.2) node[below] {{\small intervals in $M_{234}$}} -- (4.1,0.5);
 \draw[->] (4.7,-1.2) -- (5,-0.3);
 \draw[->] (4.3,-1.2) .. controls +(-0.6,0.3) and +(0,-1) .. (1.6,-0.4);
 \draw[->] (3,4) node[above] {{\small intervals in $M_{123}$}} -- (3,-0.5);
 \foreach \s in {-1,1}
 \draw[->] (3+0.5*\s,4) -- (3+2*\s,0.3);
 \draw[fill=green!40,opacity=0.6]
 (2,-0.5) -- (2,2.5) -- (4,2.5) -- (4,-0.5) ;
 \draw[<->] (-0.3,1.5) node[left] {$\scriptstyle{\gamma}$} (-0.3,0) node[left] {$\scriptstyle{*_2}$} -- (-0.3,3) node[left] {$\scriptstyle{*_1}$};
\end{scope}
\end{tikzpicture}
\caption{Meridional disks in $W_{123}$} \label{figdisks}
\end{center}
\end{figure}

The other cut-systems are obtained similarly.
We may note that a simpler quadrisection diagram of $S^2\times S^2\times S^1$ is provided in the next section.

We end this section with surface bundles over real projective spaces. 

\begin{corollary}
 A surface bundle over $\RP^n$ admits a multisection of genus $2^ng+2^{n-1}(n-1)+1$, where $g$ is the genus of the fiber.
\end{corollary}
\begin{proof}
 We need to give a suitable good ball decompositions of $\RP^n$. Using homogeneous coordinates $[x_0:\dots:x_n]$ for $\RP^n$, we set $M_i=\{|x_i|\geq|x_j|\mid \forall j\}$. This defines a good ball decomposition of $\RP^n$ where $M_{\{0,\dots,n\}}$ contains $2^n$ points and each $M_{\{0,\dots,n\}\setminus\{i\}}$ is made of $2^{n-1}$ intervals, for a total of $e=(n+1)2^{n-1}$ intervals. 
\end{proof}

\section{Multisecting fiber bundles over the circle}
\label{sec:bundlesS1}

For a closed $3$--manifold that fibers over the circle, there is a simple construction of a Heegaard splitting, which is the one described in the previous section. It has been generalized to dimension~$4$ by Gay and Kirby \cite{GayKirby} and Koenig \cite{Koenig}. A similar construction can indeed be performed in any dimension. 

\begin{theorem} \label{th:bundleS1}
 Let $W$ be an $(n+1)$--manifold which fibers over $S^1$ with fiber a closed $n$--manifold $X$ and monodromy $\varphi$. Assume that $X$ admits a multisection $X=\cup_{i=1}^{n-1}X_i$ preserved by $\varphi$, meaning that there is a permutation $\sigma$ of $\{1,\dots,n-1\}$ such that $\varphi(X_i)=X_{\sigma(i)}$. Then $W$ admits a multisection of genus $(c+n-1)g+1$, where $g$ is the genus of $S=X_{\{1,\dots,n-1\}}$ and $c$ is the number of cycles in the decomposition of~$\sigma$ (including fixed points as order--$1$ cycles). 
\end{theorem}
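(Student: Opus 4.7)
The plan is to build the multisection of $W$ by suspending the $\varphi$--invariant multisection of the fiber around $S^1$, generalizing to arbitrary dimension the $4$--dimensional constructions of Gay--Kirby and Koenig cited at the start of this section.

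First I would realize $W$ as a mapping torus $W = X \times [0,T]/\big((x,0)\sim(\varphi(x),T)\big)$, where $T$ equals the sum $s$ of the orders of the cycles of $\sigma$, and cut $S^1$ into $T$ arcs $A_1,\dots,A_T$ arranged so that the arcs corresponding to a single cycle of $\sigma$ of order $o$ form a block of $o$ consecutive arcs across which the monodromy cyclically permutes the matching $X_i$'s. On each arc $A_\ell$, the product decomposition $X\times A_\ell = \bigcup_i X_i\times A_\ell$ coming from the given multisection of $X$ is the starting point.

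Next I would define the $n$ pieces $W_1,\dots,W_n$ of the multisection of $W$ by grouping the products $X_i\times A_\ell$ and thickening each group into its neighbors across arc boundaries via disk-section tunnels, in the spirit of the proof of Theorem~\ref{th:SurfaceBundle}. The combinatorial assignment is chosen so that, locally on an arc, each intersection $W_I\cap(X\times A_\ell)$ is (up to the $1$--handles coming from the tunnels) a product $X_J\times A_\ell$ with $X_J=\bigcap_{i\in J}X_i$, which is a handlebody of the right dimension by hypothesis, and so that the monodromy gluing matches up these handlebodies correctly because of the cycle-adapted choice of $T$. Connectedness of each $W_I$ follows by chasing the permutation $\sigma$ around the cycle-blocks, much as in the proof of Theorem~\ref{th:SurfaceBundle}.

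To finish I would identify the central surface $\Sigma_W = \bigcap_{k=1}^n W_k$ as a union of copies of the central surface $S$ of the $X$--multisection joined by tubes, and compute the genus. An arc-by-arc count should yield $s$ copies of $S$ (one per arc) together with $c$ extra copies (one per cycle, coming from the symmetry forced on the tunnel pattern by the cyclic monodromy within each cycle-block), and a matching count of $N=c+s$ tubes, giving genus $Ng+1$ as claimed. The main obstacle is engineering the assignment of the pieces and the tunnels so that cycles of $\sigma$ of order greater than $1$ close up consistently while every $W_I$ remains a connected $1$--handlebody: within a cycle of order $o$, the $o$ products $X_{i_1}\times A_\ell,\dots,X_{i_o}\times A_{\ell+o-1}$ must be distributed in a way that is symmetric under the cyclic action of $\sigma$, and it is precisely this forced symmetry that produces the cycle contribution to $N$.
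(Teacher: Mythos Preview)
Your outline has the right shape, but the choice $T=s$ is a concrete error that prevents the construction from going through. Since $\sigma$ permutes the $n-1$ indices of the $X_i$, the sum of its cycle orders is always $s=n-1$. On each arc $A_\ell$ the $n-1$ slabs $X_i\times A_\ell$ must be distributed among $n-1$ of the $n$ pieces $W_1,\dots,W_n$, so exactly one piece $W_{m(\ell)}$ is absent over $A_\ell$. With only $n-1$ arcs the map $\ell\mapsto m(\ell)$ cannot hit all of $\{1,\dots,n\}$: some $W_{j_0}$ is present over every arc and hence, after the monodromy gluing, fibers over $S^1$ with $n$--dimensional handlebody fiber. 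Except in degenerate cases this is not a $1$--handlebody (its fundamental group is a nontrivial free-by-$\mathbb Z$ group, which is not free). Your appeal to ``$c$ extra copies of $S$ coming from the symmetry forced on the tunnel pattern'' is exactly where this obstruction is being papered over: a decomposition into $s$ arcs has $s$ division points on $S^1$ and no mechanism for producing further copies of the central surface of~$X$.

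The paper's resolution is to take $N=c+s$ intervals, inserting one additional interval per cycle of~$\sigma$; this extra room is precisely what allows every $W_j$, including $W_n$, to be absent on some interval and hence to avoid fibering over~$S^1$. The tubes used are not disk sections in the sense of Theorem~\ref{th:SurfaceBundle} but products $B^n\times I_\ell$, one per interval, where $B^n\subset X$ is an $n$--ball in \emph{good position} (meeting every $X_I$ transversely in a ball), added to the unique $W_j$ absent over~$I_\ell$. The real content of the argument is the explicit combinatorial scheme assigning each cell $X_i\times I_\ell$ to some $W_j$, built cycle by cycle (Figures~\ref{figbundleS1n4}--\ref{figbundleS1n67}) from the model in Figure~\ref{figbundleS1cycle}; you have not supplied such a scheme, and with your value of $T$ none can exist.
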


The following proof generalizes Koenig's proof in dimension $4$. In that case, the permutation $\sigma$ is the identity or a transposition; Koenig indeed obtains a trisection of genus $4g+1$ in the first case and $3g+1$ in the second case. Further, he shows that, in the case where $\sigma$ is the identity, some destabilizations can be performed such that one finally ends with a trisection of genus $3g+1$ in both cases. Similar destabilizations can be performed in some examples in higher dimensions, as in Figures~\ref{figCP2xS1} and~\ref{figS2xS2xS1}, but we could not find a general argument. It would be interesting to determine whether one can always get a multisection of genus $ng+1$.

\begin{proof}
We construct a multisection of $W$ using the multisection of $X$ and a decomposition of $S^1$ into intervals. Identifying $W$ with $\fract{X\times I}{(x,0)\sim(\varphi(x),1)}$, we define the $W_i$ in the \mbox{product $X\times I$.}

\begin{figure}[htb] 
\begin{center}
\begin{tikzpicture} [xscale=0.7,yscale=0.7]
 \foreach \x in {0,5} \draw (\x,0) -- (\x,3);
 \foreach \y in {0,1,2,3} \draw (0,\y) -- (5,\y);
 \foreach \x/\y in {1/0,2/2,3/1,4/0} \draw (\x,\y) -- (\x,\y+1);
 \foreach \x/\y/\i in {0.5/0/3,2.5/0/4,4.5/0/1,1.5/1/1,4/1/2,1/2/2,3.5/2/3} \draw (\x,\y+0.5) node {$W_\i$};
 \foreach \y/\i in {0/1,1/2,2/3} \draw (-0.7,\y+0.5) node {$X_{\i}$};
 \draw (2.5,-0.8) node {$\sigma=(123)$};
\begin{scope} [xshift=6cm]
 \foreach \x in {0,6} \draw (\x,0) -- (\x,3);
 \foreach \y in {0,1,2,3} \draw (0,\y) -- (6,\y);
 \foreach \x/\y in {1/1,2/2,3/1,4/0,5/0} \draw (\x,\y) -- (\x,\y+1);
 \foreach \x/\y/\i in {2/0/1,4.5/0/4,5.5/0/1,0.5/1/3,2/1/4,4.5/1/2,1/2/2,4/2/3} \draw (\x,\y+0.5) node {$W_\i$};
 \draw (3,-0.8) node {$\sigma=(23)$};
\end{scope}
\begin{scope} [xshift=13cm]
 \foreach \x in {0,7} \draw (\x,0) -- (\x,3);
 \foreach \y in {0,1,2,3} \draw (0,\y) -- (7,\y);
 \foreach \x/\y in {1/2,2/2,3/1,4/1,5/0,6/0} \draw (\x,\y) -- (\x,\y+1);
 \foreach \x/\y/\i in {2.5/0/1,5.5/0/4,6.5/0/1,1.5/1/2,3.5/1/4,5.5/1/2,0.5/2/3,1.5/2/4,4.5/2/3} \draw (\x,\y+0.5) node {$W_\i$};
 \draw (2.5,-0.8) node {$\sigma=id$};
\end{scope}
\end{tikzpicture}
\caption{Scheme of the quadrisection of $W$ for different permutations $\sigma$} \label{figbundleS1n4}
\end{center}
\end{figure}

Assume first $n=4$. There are three cases, depending on the type of $\sigma$: identity, transposition or $3$--cycle. These are schematized in Figure~\ref{figbundleS1n4}. For instance, when $\sigma=(123)$, we first set:
\begin{align*}
 W_1'&=\left(X_1\times\left[\frac45,1\right]\right)\cup_\varphi \left(X_2\times\left[0,\frac35\right]\right),&
 W_2'&=\left(X_2\times\left[\frac35,1\right]\right)\cup_\varphi \left(X_3\times\left[0,\frac25\right]\right),\\
 W_3'&=\left(X_3\times\left[\frac25,1\right]\right)\cup_\varphi \left(X_1\times\left[0,\frac15\right]\right),&
 W_4'&=X_1\times\left[\frac15,\frac45\right].
\end{align*}

At this stage, the $W_i'$ are handlebodies, but their intersections are not. We shall again fix this by tubing. We say that a $4$--ball in $X$ is in {\em good position} if it is transverse to the $X_i$, the $X_{ij}$ and the central surface $S$, and if it intersects each of these pieces along a ball of the corresponding dimension. Note that such balls do exist: take a small enough neighborhood of a point in the central surface. For each interval in our construction, namely $\left[\frac i5,\frac {i+1}5\right]$ for $i=1,2,3$ and $\left[\frac45,1\right]\cup\left[0,\frac15\right]$, we take a tube $B^4\times I$ made of a $4$--ball in good position above each point of the interval, and we add it to the $W_i'$ which doesn't appear above this interval. For instance, a tube above $\left[\frac25,\frac35\right]$ is removed from $W_1\cup W_3\cup W_4$ and added to $W_2'$ to form $W_2$. We require distinct tubes to be disjoint.

Let us check that this defines a quadrisection of $W$. Each $W_i$ is a $4$--dimensional handlebody times an interval (namely $W_i'$) connected to itself by a $1$--handle, thus a $5$--dimensional handlebody. The $W_{ij}$ are made of copies of some $X_k$ and copies of some $X_{k\ell}$ times interval, joined by two tubes, hence they are $4$--dimensional handlebodies; for instance, $W_{12}$ is the union of $X_2\times\left\lbrace\frac35\right\rbrace$ and $\left(X_{12}\times\left[\frac45,1\right]\right)\cup_{\varphi}\left(X_{23}\times\left[0,\frac25\right]\right)$ joined by the tubes above $\left[\frac25,\frac35\right]$ and $\left[\frac35,\frac45\right]$. The $W_{ijk}$ are made of two copies of some $X_{\ell m}$ and a copy of the surface $S=X_{123}$ times interval, joined by three tubes. Finally, the central surface $\Sigma=W_{1234}$ is given by four copies of $S$, above $\frac i5$ for $i=1,..,4$, joined by tubes. 

The genera of the surfaces are related by $g(\Sigma)=4g+1$, where $g=g(S)$. 
The schemes of Figure~\ref{figbundleS1n4} show how to apply this construction for other permutations of the trisection of $X$. The genus of $\Sigma$ is then given by $5g+1$ and $6g+1$ respectively. 

\begin{figure}[htb] 
\begin{center}
\begin{tikzpicture} [yscale=0.6]
 \foreach \x in {0,7} \draw (\x,0) -- (\x,5);
 \foreach \y in {0,1,2,4,5} \draw (0,\y) -- (7,\y);
 \foreach \x/\y in {1/0,6/0,5/1,2/4} \draw (\x,\y) -- (\x,\y+1);
 \foreach \x/\y/\i in {0.5/0/n-1,3.5/0/n,6.5/0/1,2.5/1/1,6/1/2,1/4/n-2,4.5/4/n-1} \draw (\x,\y+0.5) node {$W_{\i}$};
 \foreach \x in {1,3.5,6} \draw (\x,3) node {$\vdots$};
 \foreach \y/\i in {0.5/1,1.5/2,4.5/n-1} \draw (-0.7,\y) node {$X_{\i}$};
\end{tikzpicture}
\caption{Scheme of the multisection of $W$ for a monodromy inducing a cycle} \label{figbundleS1cycle}
\end{center}
\end{figure}

\begin{figure}[htb] 
\begin{center}
\begin{tikzpicture} [xscale=0.7,yscale=0.6]
 \foreach \x in {0,8} \draw (\x,0) -- (\x,5);
 \foreach \y in {0,1,...,5} \draw (0,\y) -- (8,\y);
 \foreach \x/\y in {5/0,7/0,6/1,1/2,4/2,3/3,2/4} \draw (\x,\y) -- (\x,\y+1);
 \foreach \x/\y/\i in {2.5/0/2,6/0/6,7.5/0/1,3/1/1,7/1/2,0.5/2/5,2.5/2/6,6/2/3,1.5/3/3,5.5/3/4,1/4/4,5/4/5} \draw (\x,\y+0.5) node {$W_\i$};
 \foreach \i in {1,2,...,5} \draw (-0.7,\i-0.5) node {$X_{\i}$};
 \draw (4,-0.8) node {$n=6\qquad\sigma=(12)(345)$};
\begin{scope} [xshift=11cm]
 \foreach \x in {0,10} \draw (\x,0) -- (\x,6);
 \foreach \y in {0,1,...,6} \draw (0,\y) -- (10,\y);
 \foreach \x/\y in {7/0,9/0,8/1,4/2,6/2,5/3,1/4,3/4,2/5} \draw (\x,\y) -- (\x,\y+1);
 \foreach \x/\y/\i in {3.5/0/2,8/0/7,9.5/0/1,4/1/1,9/1/2,2/2/4,5/2/7,8/2/3,2.5/3/3,7.5/3/4,0.5/4/6,2/4/7,6.5/4/5,1/5/5,6/5/6} \draw (\x,\y+0.5) node {$W_\i$};
 \foreach \i in {1,2,...,6} \draw (-0.7,\i-0.5) node {$X_{\i}$};
 \draw (5,-0.8) node {$n=7\qquad\sigma=(12)(34)(56)$};
\end{scope}
\end{tikzpicture}
\caption{Other schemes of multisections} \label{figbundleS1n67}
\end{center}
\end{figure}

\begin{figure}[htb] 
\begin{center}
\begin{tikzpicture} [xscale=0.6,yscale=0.5]
 \foreach \x in {0,22} \draw (\x,0) -- (\x,18);
 \foreach \y in {0,1,2,4,5,6,7,9,10,13,14,15,17,18} \draw (0,\y) -- (22,\y);
 \foreach \x/\y in {1/13,2/17,5/14,6/13,10/5,11/9,14/6,15/5,16/0,17/4,20/1,21/0} \draw (\x,\y) -- (\x,\y+1);
 \foreach \x/\y/\i in {8/0/i_1,18.5/0/n,21.5/0/1,10/1/1,21/1/2,8.5/4/i_1-1,19.5/4/i_1,5/5/i_2,12.5/5/n,18.5/5/i_1+1,7/6/i_1+1,18/6/i_1+2,5.5/9/i_2-1,16.5/9/i_2,0.5/13/i_c,3.5/13/n,14/13/i_{c-1}+1,2.5/14/i_{c-1}+1,13.5/14/i_{c-1}+2,1/17/i_c-1,12/17/i_c} \draw (\x,\y+0.5) node {$W_{\i}$};
 \foreach \x in {2,5,...,20} \foreach \y in {3.1,8.1,11.6,16.1} \draw (\x,\y) node {$\vdots$};
 \foreach \y/\i in {0.5/1,1.5/2,17.5/n-1} \draw (-0.8,\y) node {$X_{\i}$};
 \foreach \x/\y/\c in {15.5/0/blue,9/5/vert,0/13/red}
 \draw[line width=4pt,\c,opacity=0.4]
 (\x,\y) rectangle (\x+6.5,\y+5);
\end{tikzpicture}
\caption{Scheme for the permutation $(1\ \cdots\ i_1)(i_1+1\ \cdots\ i_2)\dots(i_{c-1}+1\ \cdots\ i_c)$\\ [2pt] \footnotesize{The blocks in blue, green and red correspond to the cycles $(1\ \cdots\ i_1)$, $(i_1+1\ \cdots\ i_2)$, and $(i_{c-1}+1\ \cdots\ i_c)$ respectively}} \label{figbundleS1general}
\end{center}
\end{figure}

In higher dimensions, the same construction works, as long as we can determine the right scheme. The idea is to use the decomposition of $\sigma$ into disjoint cycles. Figure~\ref{figbundleS1cycle} gives a model for an $(n-1)$--cycle. Then different cycles can be stacked together as examplified in Figures~\ref{figbundleS1n4} and~\ref{figbundleS1n67}; the general pattern is given in Figure~\ref{figbundleS1general}. In doing so, we decompose the interval $[0,1]$ into $N$ intervals $I_\ell=[\frac{\ell}{N+1},\frac{\ell+1}{N+1}]$ for $\ell=1,\dots,N-1$ and $I_N=[\frac{N}{N+1},1]\cup[0,\frac{1}{N+1}]$; note that $N$ equals the number of cycles in the decomposition of $\sigma$ (including fixed points as order--$1$ cycles) plus $n-1$. Also note that, above each interval $I_\ell$, $n-1$ distinct $W_i$'s appear (see Figure~\ref{figbundleS1general}). As above, we first define $W_i'$ as the disjoint union of the $X_k\times I_\ell$ for each interval $I_\ell$ above which $W_i$ appears in the $X_k$--line. We say that an $n$--ball in $X$ is in {\em good position} if it meets each $X_I$ transversely along a ball. 
Then, for each interval $I_\ell$, we take a tube $B^n\times I_\ell$ made of a ball in good position above each point of $I_\ell$, so that the different tubes are disjoint, and we add this tube to the only $W_i$ which doesn't appear above $I_\ell$ . This tube is thus removed from the other $W_j$'s; since the trace of the tube on a $W_j$ is a collar neighborhood of a disk in $\partial W_j$, this amounts to modifying $W_j$ by an isotopy. Thus constructed, the $W_i$ are made of $n$--dimensional handlebodies times interval joined by $1$--handles, hence they are $(n+1)$--dimensional handlebodies. For a non-empty proper subset $I$ of $\{1,\dots,n\}$, $W_I$ is made of some $X_J\times I_\ell$ with $|J|=|I|$ and some $X_J\times\{\frac{\ell}{N+1}\}$ with $|J|=|I|-1$, joined by $1$--handles; hence $W_I$ is an $(n-|I|+2)$--dimensional handlebody. 
Finally, $\Sigma=W_{\{1,\dots,n\}}$ is made of $N$ copies of the punctured $S$ joined by $N$ tubes.
\end{proof}

\begin{figure}[htb] 
\begin{center}
\begin{tikzpicture} 
\begin{scope} [scale=0.5,yshift=2cm]
 \draw (0,0) .. controls +(0,1) and +(-1,0) .. (3,1.5) .. controls +(1,0) and +(0,1) .. (6,0);
 \draw (0,0) .. controls +(0,-1) and +(-1,0) .. (3,-1.5) .. controls +(1,0) and +(0,-1) .. (6,0);
 \draw (2,0) ..controls +(0.5,-0.25) and +(-0.5,-0.25) .. (4,0);
 \draw (2.3,-0.1) ..controls +(0.6,0.2) and +(-0.6,0.2) .. (3.7,-0.1);
 \draw[red] (3,-0.2) ..controls +(0.2,-0.5) and +(0.2,0.5) .. (3,-1.5);
 \draw[dashed,red] (3,-0.2) ..controls +(-0.2,-0.5) and +(-0.2,0.5) .. (3,-1.5);
 \draw[blue] (3,0)ellipse(1.6 and 0.8);
 \draw[green] (2.5,-0.15) .. controls +(0.2,-0.3) and +(-0.3,0) .. (3,-1) .. controls +(2.5,0) and +(2.5,0) .. (3,1.1) .. controls +(-2.3,0) and +(-2,1) .. (2.3,-1.43);
 \draw[green,dashed] (2.3,-1.43) .. controls +(-0.1,0.5) and +(-0.2,-0.5) .. (2.5,-0.15);
\end{scope}
\begin{scope} [xscale=0.7,yscale=0.7,xshift=10cm]
 \foreach \x in {0,7} \draw (\x,0) -- (\x,3);
 \foreach \y in {0,1,2,3} \draw (0,\y) -- (7,\y);
 \foreach \y/\i in {0/1,1/2,2/3} \draw (-0.7,\y+0.5) node {$X_{\i}$};
 \foreach \x/\y in {1/0,2/1,3/2,4/2,5/1,6/0} \draw (\x,\y) -- (\x,\y+1);
 \foreach \x/\y/\i in {0.5/0/2,3.5/0/1,6.5/0/2,1/1/3,3.5/1/2,6/1/3,1.5/2/4,3.5/2/3,5.5/2/4} \draw (\x,\y+0.5) node {$W_\i$};
\end{scope}
\end{tikzpicture}
\caption{Trisection diagram for $\CP^2$ and scheme of a quadrisection of $\CP^2\times S^1$} \label{figCP2S1scheme}
\end{center}
\end{figure}

\begin{figure}[htb] 
\begin{center}
\begin{tikzpicture} 
\begin{scope} [scale=0.6]
 \draw (0,0) circle (6);
 \draw (-1,0) ..controls +(0.5,-0.4) and +(-0.5,-0.4) .. (1,0) (-0.8,-0.1) ..controls +(0.6,0.3) and +(-0.6,0.3) .. (0.8,-0.1);
 \foreach \t in {30,90,150,210,270,330}
 \draw[rotate=\t] (-1,-4) ..controls +(0.5,-0.4) and +(-0.5,-0.4) .. (1,-4) (-0.8,-4.1) ..controls +(0.6,0.3) and +(-0.6,0.3) .. (0.8,-4.1);
 \foreach \x/\c in {-0.1/orange,0.1/blue} {
 \draw[color=\c] (\x,-0.3) .. controls +(-0.3,-1) and +(-0.3,1) .. (\x,-6);
 \draw[color=\c,dashed] (\x,-0.3) .. controls +(0.3,-1) and +(0.3,1) .. (\x,-6);}
 \foreach \x/\c in {-0.1/purple,0.1/green} {
 \draw[color=\c] (\x,0.12) .. controls +(0.3,1) and +(0.3,-1) .. (\x,6);
 \draw[color=\c,dashed] (\x,0.12) .. controls +(-0.3,1) and +(-0.3,-1) .. (\x,6);} 
 \foreach \t in {90,150,210,270}
 \draw[rotate=\t,color=green] (0,-4.1) ellipse (1.2 and 0.7);
 \foreach \t/\c in {0/green,60/blue,120/purple,180/orange,240/purple,300/blue} {
 \draw[rotate=\t,color=\c,dashed] (-1.5,-3.75) .. controls +(0.6,-0.3) and +(-0.6,-0.3) .. (1.5,-3.75);
 \draw[rotate=\t,color=\c] (-1.5,-3.75) .. controls +(0.6,0.3) and +(-0.6,0.3) .. (1.5,-3.75);
 \draw[rotate=\t,color=\c] (0,-3.3) ellipse (3.3 and 1.7);}
 \foreach \t in {0,60,120,180} {
 \draw[rotate=\t,color=orange] (-6,0) .. controls +(0.4,-0.2) and +(-0.4,-0.2) .. (-4.3,0);
 \draw[rotate=\t,color=orange,dashed] (-6,0) .. controls +(0.4,0.2) and +(-0.4,0.2) .. (-4.3,0);}
 \foreach \t/\c/\s in {-60/purple/1,-120/purple/-1,60/blue/1,120/blue/-1} {
 \draw[rotate=\t,color=\c] (4.23,-0.5*\s) .. controls +(0.4,-0.4*\s) and +(0.4,0) .. (4,-1.27*\s) .. controls +(-0.4,0) and +(0,-0.6*\s) .. (3.3,0) .. controls +(0,0.6*\s) and +(-0.4,0) .. (4,1.27*\s) .. controls +(1.2,0) and +(-0.6,0.5*\s) .. (5.98,-0.5*\s);
 \draw[rotate=\t,color=\c,dashed] (4.23,-0.5*\s) .. controls +(0.5,0.2*\s) and +(-0.4,0.1*\s) .. (5.98,-0.5*\s);}
 \foreach \t/\i in {30/6,90/5,150/4,210/3,270/2,330/1} 
 \draw[rotate=\t] (0,-6.5) node {$S_\i$};
\end{scope}
\begin{scope} [scale=0.55,xshift=14.5cm,yshift=0.5cm]
 \draw (0,0) circle (6);
 \draw (-1,0) ..controls +(0.5,-0.4) and +(-0.5,-0.4) .. (1,0) (-0.8,-0.1) ..controls +(0.6,0.3) and +(-0.6,0.3) .. (0.8,-0.1);
 \foreach \t in {45,135,225,315}
 \draw[rotate=\t] (-1,-4) ..controls +(0.5,-0.4) and +(-0.5,-0.4) .. (1,-4) (-0.8,-4.1) ..controls +(0.6,0.3) and +(-0.6,0.3) .. (0.8,-4.1);
 \foreach \x/\c in {-0.1/orange,0.1/blue} {
 \draw[color=\c] (\x,-0.3) .. controls +(-0.3,-1) and +(-0.3,1) .. (\x,-6);
 \draw[color=\c,dashed] (\x,-0.3) .. controls +(0.3,-1) and +(0.3,1) .. (\x,-6);}
 \foreach \x/\c in {-0.1/purple,0.1/green} {
 \draw[color=\c] (\x,0.12) .. controls +(0.3,1) and +(0.3,-1) .. (\x,6);
 \draw[color=\c,dashed] (\x,0.12) .. controls +(-0.3,1) and +(-0.3,-1) .. (\x,6);} 
 \foreach \t/\c in {45/blue,135/green,225/green,315/blue}
 \draw[rotate=\t,color=\c] (0,-4.1) ellipse (1.2 and 0.7);
 \foreach \t/\c in {0/green,180/orange}
 \draw[rotate=\t,color=\c] (0,-2.8) ellipse (4.3 and 1.9);
 \foreach \t/\c in {0/green,90/purple,180/orange,270/purple} {
 \draw[rotate=\t,color=\c,dashed] (-2.4,-3.3) .. controls +(0.8,-0.4) and +(-0.8,-0.4) .. (2.4,-3.3);
 \draw[rotate=\t,color=\c] (-2.4,-3.3) .. controls +(0.8,0.4) and +(-0.8,0.4) .. (2.4,-3.3);}
 \foreach \t in {45,135} {
 \draw[rotate=\t,color=orange] (-6,0) .. controls +(0.4,-0.2) and +(-0.4,-0.2) .. (-4.3,0);
 \draw[rotate=\t,color=orange,dashed] (-6,0) .. controls +(0.4,0.2) and +(-0.4,0.2) .. (-4.3,0);}
 \foreach \t/\s in {45/1,135/-1} {
 \draw[rotate=\t,color=blue] (4.23,-0.5*\s) .. controls +(0.4,-0.4*\s) and +(0.4,0) .. (4,-1.27*\s) .. controls +(-0.4,0) and +(0,-0.6*\s) .. (3.3,0) .. controls +(0,0.6*\s) and +(-0.4,0) .. (4,1.27*\s) .. controls +(1.2,0) and +(-0.6,0.5*\s) .. (5.98,-0.5*\s);
 \draw[rotate=\t,color=blue,dashed] (4.23,-0.5*\s) .. controls +(0.5,0.2*\s) and +(-0.4,0.1*\s) .. (5.98,-0.5*\s);}
 \foreach \t/\s in {-45/1,-135/-1} {
 \draw[rotate=\t,color=purple] (4.23,-0.5*\s) .. controls +(0.4,-0.4*\s) and +(0.4,0) .. (4,-1.27*\s) .. controls +(-0.4,0) and +(1,-1*\s) .. (1,1*\s) .. controls +(-1,1*\s) and +(1,-1*\s) .. (-1,3*\s) .. controls +(-0.7,0.7*\s) and +(-0.7,-0.7*\s) .. (-1,5.2*\s) .. controls +(0.9,0.9*\s) and +(-1.5,1.5*\s) .. (3.5,3.5*\s) .. controls +(1.2,-1.2*\s) and +(-0.5,1*\s) .. (5.98,-0.5*\s);
 \draw[rotate=\t,color=purple,dashed] (4.23,-0.5*\s) .. controls +(0.5,0.2*\s) and +(-0.4,0.1*\s) .. (5.98,-0.5*\s);} 
 \draw (0,-6.8) node {\footnotesize{After destabilizing}};
\end{scope}
\end{tikzpicture}
\end{center}
\caption{Quadrisection diagrams of $\CP^2\times S^1$}
\label{figCP2xS1}
\end{figure}

\begin{figure}[hbt] 
\begin{center}
\begin{tikzpicture} 
\begin{scope}[xshift=1.4cm,yshift=12cm,scale=0.35]
\draw (0,0) ..controls +(0,1) and +(-2,1) .. (4,2) ..controls +(2,-1) and +(-2,-1) .. (8,2) ..controls +(2,1) and +(0,1) .. (12,0);
\draw (0,0) ..controls +(0,-1) and +(-2,-1) .. (4,-2) ..controls +(2,1) and +(-2,1) .. (8,-2) ..controls +(2,-1) and +(0,-1) .. (12,0);
\draw (2,0) ..controls +(0.5,-0.25) and +(-0.5,-0.25) .. (4,0);
\draw (2.3,-0.1) ..controls +(0.6,0.2) and +(-0.6,0.2) .. (3.7,-0.1);
\draw (8,0) ..controls +(0.5,-0.25) and +(-0.5,-0.25) .. (10,0);
\draw (8.3,-0.1) ..controls +(0.6,0.2) and +(-0.6,0.2) .. (9.7,-0.1);
\draw[color=green,dashed] (3.7,-0.1) .. controls +(1,-0.5) and +(-1,-0.5) .. (8.3,-0.1);
\foreach \x/\c in {3/red,9/blue} {
\draw[color=\c] (\x,-0.2) ..controls +(0.2,-0.5) and +(0.2,0.5) .. (\x,-2.3);
\draw[dashed,color=\c] (\x,-0.2) ..controls +(-0.2,-0.5) and +(-0.2,0.5) .. (\x,-2.3);
\draw[color=\c] (12-\x,0)ellipse(1.6 and 0.8);}
\draw[color=green] (1,0) .. controls +(0,-1) and +(-1,0) .. (3,-1.2) .. controls +(1,0) and +(-1,0) .. (6,-0.8) .. controls +(1,0) and +(-1,0) .. (9,-1.2) .. controls +(1,0) and +(0,-1) .. (11,0) .. controls +(0,1) and +(1,0) .. (9,1.2) .. controls +(-1,0) and +(1,0) .. (6,0.8) .. controls +(-1,0) and +(1,0) .. (3,1.2) .. controls +(-1,0) and +(0,1) .. (1,0);
\draw[color=green] (3.7,-0.1) .. controls +(1,0.5) and +(-1,0.5) .. (8.3,-0.1);
\end{scope}
\begin{scope} [scale=0.65,xshift=-4cm,yshift=13.5cm]
 \draw (0,0) circle (7);
 \draw (-1,0) ..controls +(0.5,-0.4) and +(-0.5,-0.4) .. (1,0) (-0.8,-0.1) ..controls +(0.6,0.3) and +(-0.6,0.3) .. (0.8,-0.1);
 \foreach \t in {30,90,150,210,270,330} {
 \draw[rotate=\t] (-1,-5.3) ..controls +(0.5,-0.4) and +(-0.5,-0.4) .. (1,-5.3) (-0.8,-5.4) ..controls +(0.6,0.3) and +(-0.6,0.3) .. (0.8,-5.4);
 \draw[rotate=\t] (-1,-3) ..controls +(0.5,-0.4) and +(-0.5,-0.4) .. (1,-3) (-0.8,-3.1) ..controls +(0.6,0.3) and +(-0.6,0.3) .. (0.8,-3.1);}
 \foreach \x/\c in {-0.1/orange,0.1/purple} {
 \draw[color=\c] (\x,-0.3) .. controls +(-0.3,-1) and +(-0.3,1) .. (\x,-7);
 \draw[color=\c,dashed] (\x,-0.3) .. controls +(0.3,-1) and +(0.3,1) .. (\x,-7);}
 \foreach \x/\c in {-0.1/blue,0.1/green} {
 \draw[color=\c] (\x,0.12) .. controls +(0.3,1) and +(0.3,-1) .. (\x,7);
 \draw[color=\c,dashed] (\x,0.12) .. controls +(-0.3,1) and +(-0.3,-1) .. (\x,7);} 
 \foreach \t in {90,150,210,270}
 \draw[rotate=\t,color=green] (0,-5.4) ellipse (1.2 and 0.6);
 \foreach \t in {30,90,270,330}
 \draw[rotate=\t,color=orange] (0,-3.1) ellipse (1.1 and 0.4);
 \foreach \s in {-1,1} {
 \draw[xscale=\s,color=green] (-2.85,0) .. controls +(0.4,-0.2) and +(-0.4,-0.2) .. (-0.8,-0.1);
 \draw[xscale=\s,color=green,dashed] (-2.85,0) .. controls +(0.4,0.2) and +(-0.4,0.2) .. (-0.8,-0.1);
 \draw[xscale=\s,color=green] (0.4,0.04) .. controls +(0.5,0.5) and +(-0,-0.6) .. (1.4,2.5);
 \draw[xscale=\s,color=green,dashed] (0.4,0.04) .. controls +(0,0.5) and +(-0.4,-0.2) .. (1.4,2.5);}
 \foreach \t/\c in {0/green,60/blue,120/purple,180/orange,240/purple,300/blue} {
 \draw[rotate=\t,color=\c,dashed] (-2,-5.05) .. controls +(0.6,-0.3) and +(-0.6,-0.3) .. (2,-5.05);
 \draw[rotate=\t,color=\c] (-2,-5.05) .. controls +(0.6,0.3) and +(-0.6,0.3) .. (2,-5.05);
 \foreach \s in {-1,1} 
 \draw[rotate=\t,color=\c,xscale=\s] (0,-6.4) .. controls +(2,0) and +(0.2,-1) .. (4,-4.3) .. controls +(-0.1,0.5) and +(0.8,0.2) .. (2.7,-3.8) .. controls +(-1.2,-0.3) and +(1,0) .. (0,-4.5);
 \draw[rotate=\t,color=\c,dashed] (-0.9,-3.1) .. controls +(0.6,-0.3) and +(-0.6,-0.3) .. (0.9,-3.1);
 \draw[rotate=\t,color=\c] (-0.9,-3.1) .. controls +(0.6,0.3) and +(-0.6,0.3) .. (0.9,-3.1);
 \foreach \s in {-1,1} 
 \draw[rotate=\t,color=\c,xscale=\s] (0,-3.8) .. controls +(1.6,0) and +(0.1,-0.5) .. (2.6,-2.2) .. controls +(-0.05,0.25) and +(0.4,0.1) .. (2,-2) .. controls +(-0.6,-0.15) and +(0.8,0) .. (0,-2.5);
 }
 \foreach \t in {0,60,120,180} {
 \draw[rotate=\t,color=orange] (-7,0) .. controls +(0.4,-0.2) and +(-0.4,-0.2) .. (-5.6,0);
 \draw[rotate=\t,color=orange,dashed] (-7,0) .. controls +(0.4,0.2) and +(-0.4,0.2) .. (-5.6,0);}
 \foreach \t/\c in {60/purple,120/purple,240/blue,300/blue} {
 \draw[rotate=\t,color=\c] (-5.2,0) .. controls +(0.4,-0.2) and +(-0.4,-0.2) .. (-3.3,0);
 \draw[rotate=\t,color=\c,dashed] (-5.2,0) .. controls +(0.4,0.2) and +(-0.4,0.2) .. (-3.3,0);
 \foreach \s in {-1,1} 
 \draw[rotate=\t,color=\c,yscale=\s] (-6.5,0) .. controls +(0,-1) and +(-0.5,0) .. (-5.5,-1.5) .. controls +(1,0) and +(-1,0) .. (-3.05,-1.25) .. controls +(0.3,0) and +(0,-0.6) .. (-2.4,0);}
\end{scope}
\begin{scope} [scale=0.54,xshift=-6cm]
 \draw (0,0) circle (7);
 \draw (-1,0) ..controls +(0.5,-0.4) and +(-0.5,-0.4) .. (1,0) (-0.8,-0.1) ..controls +(0.6,0.3) and +(-0.6,0.3) .. (0.8,-0.1);
 \foreach \t in {60,150,210,300} {
 \draw[rotate=\t] (-1,-5.3) ..controls +(0.5,-0.4) and +(-0.5,-0.4) .. (1,-5.3) (-0.8,-5.4) ..controls +(0.6,0.3) and +(-0.6,0.3) .. (0.8,-5.4);}
 \foreach \t in {30,120,240,330} {
 \draw[rotate=\t] (-1,-3) ..controls +(0.5,-0.4) and +(-0.5,-0.4) .. (1,-3) (-0.8,-3.1) ..controls +(0.6,0.3) and +(-0.6,0.3) .. (0.8,-3.1);}
 \foreach \s in {-1,1}
 \draw[xscale=\s] (2.7,1.5) node {$\scriptstyle{*}$};
 \foreach \x/\c in {-0.1/orange,0.1/purple} {
 \draw[color=\c] (\x,-0.3) .. controls +(-0.3,-1) and +(-0.3,1) .. (\x,-7);
 \draw[color=\c,dashed] (\x,-0.3) .. controls +(0.3,-1) and +(0.3,1) .. (\x,-7);}
 \foreach \x/\c in {-0.1/blue,0.1/green} {
 \draw[color=\c] (\x,0.12) .. controls +(0.3,1) and +(0.3,-1) .. (\x,7);
 \draw[color=\c,dashed] (\x,0.12) .. controls +(-0.3,1) and +(-0.3,-1) .. (\x,7);} 
 \foreach \t/\c in {150/green,210/green,-60/blue,60/blue}
 \draw[rotate=\t,color=\c] (0,-5.4) ellipse (1.2 and 0.6);
 \foreach \t/\c in {30/orange,330/orange,120/purple,240/purple}
 \draw[rotate=\t,color=\c] (0,-3.1) ellipse (1.1 and 0.4);
 \foreach \s in {-1,1} {
 \draw[xscale=\s,color=green] (0.4,0.04) .. controls +(0.8,0.1) and +(-0.2,-0.6) .. (2.5,1.4);
 \draw[xscale=\s,color=green,dashed] (0.4,0.04) .. controls +(0.2,0.5) and +(-0.4,-0.1) .. (2.5,1.4);}
 \draw[color=orange,dashed] (-2,5.05) .. controls +(0.6,0.3) and +(-0.6,0.3) .. (2,5.05);
 \draw[color=orange] (-2,5.05) .. controls +(0.6,-0.3) and +(-0.6,-0.3) .. (2,5.05);
 \foreach \s in {-1,1} 
 \draw[color=orange,xscale=\s] (0,6.4) .. controls +(2,0) and +(0.2,1) .. (4,4.3) .. controls +(-0.1,-0.5) and +(0.8,-0.2) .. (2.7,3.8) .. controls +(-1.2,0.3) and +(1,0) .. (0,4.5);
 \draw[color=orange,dashed] (-2.3,2) .. controls +(0.6,0.3) and +(-0.6,0.3) .. (2.3,2);
 \draw[color=orange] (-2.3,2) .. controls +(0.6,-0.3) and +(-0.6,-0.3) .. (2.3,2);
 \foreach \s in {-1,1} 
 \draw[color=orange,xscale=\s] (0,3.2) .. controls +(2,0) and +(0,1) .. (3.5,1.1) .. controls +(0,-0.7) and +(0.4,-0.1) .. (2.7,0.5) .. controls +(-1.2,0.3) and +(1,0) .. (0,1.3);
 \draw[color=green,dashed] (-0.9,-3.1) .. controls +(0.6,-0.3) and +(-0.6,-0.3) .. (0.9,-3.1);
 \draw[color=green] (-0.9,-3.1) .. controls +(0.6,0.3) and +(-0.6,0.3) .. (0.9,-3.1);
 \foreach \s in {-1,1} {
 \draw[color=green,xscale=\s] (0,-3.8) .. controls +(1.6,0) and +(0.1,-0.5) .. (2.6,-2.2) .. controls +(-0.05,0.25) and +(0.4,0.1) .. (2,-2) .. controls +(-0.6,-0.15) and +(0.8,0) .. (0,-2.5) 
 (0,-5) .. controls +(2,0) and +(-1,-1) .. (4.3,-3.4)
 (0,-5.3) .. controls +(5,0) and +(1.8,-0.6) .. (5.2,-1.4) .. controls +(-1.2,0.4) and +(4,0) .. (0,-4.4);
 \draw[color=green,xscale=\s,dashed] 
 (0,-4.7) .. controls +(2,0) and +(-1,-0.5) .. (4.3,-3.4);
 }
 \foreach \t in {30,150} {
 \draw[rotate=\t,color=orange] (-7,0) .. controls +(0.4,-0.2) and +(-0.4,-0.2) .. (-5.6,0);
 \draw[rotate=\t,color=orange,dashed] (-7,0) .. controls +(0.4,0.2) and +(-0.4,0.2) .. (-5.6,0);}
 \foreach \s in {-1,1} {
 \draw[color=purple,xscale=\s] (1.7,-2.8) .. controls +(0.4,-0.3) and +(-0.4,-0.3) .. (4.4,-2.8)
 (4.8,-2.2) .. controls +(0.2,2) and +(0.7,-1) .. (3.2,4.3);
 \draw[color=purple,xscale=\s,dashed] (1.7,-2.8) .. controls +(0.4,0.3) and +(-0.4,0.3) .. (4.4,-2.8)
 (4.8,-2.2) .. controls +(-0.5,1.5) and +(0,-1) .. (3.2,4.3);
 }
 \foreach \s in {-1,1} {
 \draw[color=blue,xscale=\s] (2.8,1.7) .. controls +(0.4,0.5) and +(0.4,-0.3) .. (2.8,4.4)
 (2,-2.3) .. controls +(0.4,0.5) and +(0.2,-0.3) .. (2.9,1);
 \draw[color=blue,xscale=\s,dashed] (2.8,1.7) .. controls +(-0.3,0.5) and +(-0.3,-0.3) .. (2.8,4.4)
 (2,-2.3) .. controls +(-0.1,0.6) and +(-0.2,-0.3) .. (2.9,1);
 }
 \foreach \s in {-1,1} {
 \draw[color=blue,xscale=\s] (4.25,-5.06) arc (-50:70:6.6) .. controls +(-1,0.4) and +(0,1) .. (1,2.5) .. controls +(0,-1) and +(0,1) .. (1.5,0) .. controls +(0,-1) and +(0,1) .. (0.5,-3) .. controls +(0,-2) and +(-1,-0.7) .. (4.25,-5.06);
 \draw[color=purple,xscale=\s] (4.37,-5.21) arc (-50:70:6.8) .. controls +(-1.2,0.5) and +(0,1.5) .. (0.8,2.5) .. controls +(0,-1) and +(0,1) .. (1.3,0) .. controls +(0,-1) and +(0,1) .. (0.4,-3) .. controls +(0,-2.5) and +(-1,-0.7) .. (4.37,-5.21);}
\end{scope}
\begin{scope} [scale=0.54,xshift=8.3cm]
 \draw (0,0) circle (7);
 \draw (-1,0) ..controls +(0.5,-0.4) and +(-0.5,-0.4) .. (1,0) (-0.8,-0.1) ..controls +(0.6,0.3) and +(-0.6,0.3) .. (0.8,-0.1);
 \foreach \t in {60,150,210,300} {
 \draw[rotate=\t] (-1,-5.3) ..controls +(0.5,-0.4) and +(-0.5,-0.4) .. (1,-5.3) (-0.8,-5.4) ..controls +(0.6,0.3) and +(-0.6,0.3) .. (0.8,-5.4);}
 \foreach \t in {30,330} {
 \draw[rotate=\t] (-1,-3) ..controls +(0.5,-0.4) and +(-0.5,-0.4) .. (1,-3) (-0.8,-3.1) ..controls +(0.6,0.3) and +(-0.6,0.3) .. (0.8,-3.1);}
 \draw[rotate=180] (-1,-2) ..controls +(0.5,-0.4) and +(-0.5,-0.4) .. (1,-2) (-0.8,-2.1) ..controls +(0.6,0.3) and +(-0.6,0.3) .. (0.8,-2.1);
 \foreach \s in {-1,1}
 \draw (-2.7,4.65) node {$\scriptstyle{*}$} (-1.5,-2.7) node {$\scriptstyle{*}$};
 \foreach \x/\c in {-0.1/orange,0/blue,0.1/purple} {
 \draw[color=\c] (\x,-0.3) .. controls +(-0.3,-1) and +(-0.3,1) .. (\x,-7);
 \draw[color=\c,dashed] (\x,-0.3) .. controls +(0.3,-1) and +(0.3,1) .. (\x,-7);}
 \draw[color=green,rotate=60] (-0.1,0.33) .. controls +(0.3,1) and +(0.3,-1) .. (0,7);
 \draw[color=green,rotate=60,dashed] (-0.1,0.33) .. controls +(-0.3,1) and +(-0.3,-1) .. (0,7);
 \foreach \t/\c in {150/green,210/green,-60/blue,60/blue}
 \draw[rotate=\t,color=\c] (0,-5.4) ellipse (1.2 and 0.6);
 \foreach \t in {30,330}
 \draw[rotate=\t,color=orange] (0,-3.1) ellipse (1.1 and 0.4);
 \draw[color=orange] (0,2.1) ellipse (1.2 and 0.5);
 \draw[color=purple] (0,2.1) ellipse (1.1 and 0.4); 
 \draw[color=green] (0,0.1) .. controls +(0.2,0.6) and +(0.2,-0.6) .. (0,1.9);
 \draw[color=green,dashed] (0,0.1) .. controls +(-0.2,0.6) and +(-0.2,-0.6) .. (0,1.9);
 \draw[color=orange,dashed] (-2,5.05) .. controls +(0.6,0.3) and +(-0.6,0.3) .. (2,5.05);
 \draw[color=orange] (-2,5.05) .. controls +(0.6,-0.3) and +(-0.6,-0.3) .. (2,5.05);
 \foreach \s in {-1,1} 
 \draw[color=orange,xscale=\s] (0,6.4) .. controls +(2,0) and +(0.2,1) .. (4,4.3) .. controls +(-0.1,-0.5) and +(0.8,-0.2) .. (2.7,3.8) .. controls +(-1.2,0.3) and +(1,0) .. (0,4.5);
 \draw[color=green,dashed] (-0.9,-3.1) .. controls +(0.6,-0.3) and +(-0.6,-0.3) .. (0.9,-3.1);
 \draw[color=green] (-0.9,-3.1) .. controls +(0.6,0.3) and +(-0.6,0.3) .. (0.9,-3.1);
 \foreach \s in {-1,1} {
 \draw[color=green,xscale=\s] (0,-3.8) .. controls +(1.6,0) and +(0.1,-0.5) .. (2.6,-2.2) .. controls +(-0.05,0.25) and +(0.4,0.1) .. (2,-2) .. controls +(-0.6,-0.15) and +(0.8,0) .. (0,-2.5) 
 (0,-5) .. controls +(2,0) and +(-1,-1) .. (4.3,-3.4)
 (0,-5.3) .. controls +(5,0) and +(1.8,-0.6) .. (5.2,-1.4) .. controls +(-1.2,0.4) and +(4,0) .. (0,-4.4);
 \draw[color=green,xscale=\s,dashed] 
 (0,-4.7) .. controls +(2,0) and +(-1,-0.5) .. (4.3,-3.4);
 }
 \foreach \t in {30,150} {
 \draw[rotate=\t,color=orange] (-7,0) .. controls +(0.4,-0.2) and +(-0.4,-0.2) .. (-5.6,0);
 \draw[rotate=\t,color=orange,dashed] (-7,0) .. controls +(0.4,0.2) and +(-0.4,0.2) .. (-5.6,0);}
 \foreach \s in {-1,1} {
 \draw[color=purple,xscale=\s] (1.7,-2.8) .. controls +(0.4,-0.3) and +(-0.4,-0.3) .. (4.4,-2.8)
 (4.8,-2.2) .. controls +(0.2,2) and +(0.7,-1) .. (3.2,4.3);
 \draw[color=purple,xscale=\s,dashed] (1.7,-2.8) .. controls +(0.4,0.3) and +(-0.4,0.3) .. (4.4,-2.8)
 (4.8,-2.2) .. controls +(-0.5,1.5) and +(0,-1) .. (3.2,4.3);
 }
 \foreach \s in {-1,1} {
 \draw[color=blue,xscale=\s] (0.6,2.2) .. controls +(0.6,0.2) and +(0.1,-0.4) .. (2.8,4.4)
 (2.15,-2.3) .. controls +(0.1,0.6) and +(0.4,-0.4) .. (0.6,2);
 \draw[color=blue,xscale=\s,dashed] (0.6,2.2) .. controls +(0,0.5) and +(-0.4,-0.1) .. (2.8,4.4)
 (2.15,-2.3) .. controls +(-0.2,0.3) and +(-0,-0.3) .. (0.6,2);
 }
 \foreach \s in {-1,1} {
 \draw[color=purple,xscale=\s] (4.25,-5.06) arc (-50:70:6.6) .. controls +(-1.2,0.4) and +(0,1) .. (1.5,2.5) .. controls +(0,-1) and +(0,1) .. (1.5,0) .. controls +(0,-1) and +(0,1) .. (0.5,-3) .. controls +(0,-2) and +(-1,-0.7) .. (4.25,-5.06);
 \draw[color=blue,xscale=\s] (4.37,-5.21) arc (-50:90:6.8) (0,1) arc (90:0:1.2) .. controls +(0,-1) and +(0,1) .. (0.4,-3) .. controls +(0,-2.5) and +(-1,-0.7) .. (4.37,-5.21);}
\end{scope}
\begin{scope} [scale=0.54,xshift=10cm,yshift=13.5cm]
 \draw (0,0) circle (6);
 \draw (-1,0) ..controls +(0.5,-0.4) and +(-0.5,-0.4) .. (1,0) (-0.8,-0.1) ..controls +(0.6,0.3) and +(-0.6,0.3) .. (0.8,-0.1);
 \foreach \t in {30,90,270,330} 
 \draw[rotate=\t] (-1,-4) ..controls +(0.5,-0.4) and +(-0.5,-0.4) .. (1,-4) (-0.8,-4.1) ..controls +(0.6,0.3) and +(-0.6,0.3) .. (0.8,-4.1);
 \draw[rotate=180] (-1,-2) ..controls +(0.5,-0.4) and +(-0.5,-0.4) .. (1,-2) (-0.8,-2.1) ..controls +(0.6,0.3) and +(-0.6,0.3) .. (0.8,-2.1) (-1,-4.3) ..controls +(0.5,-0.4) and +(-0.5,-0.4) .. (1,-4.3) (-0.8,-4.4) ..controls +(0.6,0.3) and +(-0.6,0.3) .. (0.8,-4.4);
 \foreach \x/\c in {-0.1/blue,0.1/purple} {
 \draw[color=\c] (\x,-0.3) .. controls +(-0.3,-1) and +(-0.3,1) .. (\x,-6);
 \draw[color=\c,dashed] (\x,-0.3) .. controls +(0.3,-1) and +(0.3,1) .. (\x,-6);}
 \foreach \c/\t/\s in {orange/-60/1,green/60/-1} {
 \draw[color=\c,rotate=\t] (0.1*\s,0.33) .. controls +(0.3,1) and +(0.3,-1) .. (0,6);
 \draw[color=\c,rotate=\t,dashed] (0.1*\s,0.33) .. controls +(-0.3,1) and +(-0.3,-1) .. (0,6); } 
 \foreach \t/\c in {90/orange,30/blue,330/blue}
 \draw[rotate=\t,color=\c] (0,-4.1) ellipse (1.2 and 0.6);
 \foreach \y/\c in {2.1/orange,4.4/green} 
 \draw[color=\c] (0,\y) ellipse(1.2 and 0.5);
 \draw[color=purple] (0,2.1) ellipse (1.1 and 0.4); 
 \draw[color=green] (0,0.1) .. controls +(0.2,0.6) and +(0.2,-0.6) .. (0,1.9);
 \draw[color=green,dashed] (0,0.1) .. controls +(-0.2,0.6) and +(-0.2,-0.6) .. (0,1.9);
 \draw[color=blue] (0,2.3) .. controls +(0.2,0.6) and +(0.2,-0.6) .. (0,4.18);
 \draw[color=blue,dashed] (0,2.3) .. controls +(-0.2,0.6) and +(-0.2,-0.6) .. (0,4.18);
 \foreach \t/\c in {0/green,60/purple,300/purple} {
 \draw[rotate=\t,color=\c,dashed] (-1.3,-3.95) .. controls +(0.6,-0.3) and +(-0.6,-0.3) .. (1.3,-3.95);
 \draw[rotate=\t,color=\c] (-1.3,-3.95) .. controls +(0.6,0.3) and +(-0.6,0.3) .. (1.3,-3.95);}
 \foreach \t/\c in {0/green,300/purple} {
 \foreach \s in {-1,1} 
 \draw[rotate=\t,color=\c,xscale=\s] (0,-5.2) .. controls +(2,0) and +(0.2,-1) .. (3.5,-3) .. controls +(-0.1,0.5) and +(0.4,0.1) .. (2.5,-2.5) .. controls +(-1.2,-0.3) and +(1,0) .. (0,-3.3);}
 \draw[color=green] (-3.9,-0.3) .. controls +(2,-2) and +(-2,-2) .. (3.9,-0.3);
 \draw[color=green,dashed] (-3.9,-0.3) .. controls +(2,-1.5) and +(-2,-1.5) .. (3.9,-0.3);
 \foreach \s in {-1,1} 
 \draw[color=green,xscale=\s] (0,-2.6) .. controls +(2,0) and +(0,-2) .. (4.9,0) .. controls +(0,0.7) and +(0.5,0) .. (4.1,1.4) .. controls +(-1,0) and +(2,0) .. (0,-1);
 \foreach \s/\c in {1/purple,-1/orange} {
 \draw[xscale=\s,color=\c] (0.8,4.4) .. controls +(0.2,-1) and +(-1,0.2) .. (4.1,0.8);
 \draw[xscale=\s,color=\c,dashed] (0.8,4.4) .. controls +(1,-0.5) and +(-0.5,1) .. (4.1,0.8);}
 \draw[color=orange] (-4.6,0) .. controls +(0,-0.5) and +(-0.4,0) .. (-4,-1.2) .. controls +(0.3,0) and +(0,-0.3) .. (-3.5,-0.5) .. controls +(0,1) and +(-1,-0.2) .. (0,3.5) .. controls +(0.5,0.1) and +(0,-0.4) .. (1.4,4.4) .. controls +(0,0.4) and +(0.7,-0.1) .. (0,5.2) .. controls +(-2.1,0.3) and +(0,2) .. (-4.6,0);
 \draw[color=purple] (3.4,-0.5) .. controls +(0,1) and +(1,-0.2) .. (0,3.5) .. controls +(-0.5,0.1) and +(0,-0.4) .. (-1.4,4.4) .. controls +(0,0.4) and +(-0.7,-0.1) .. (0,5.2) .. controls +(2.1,0.3) and +(0,2) .. (5.1,0) .. controls +(0,-2) and +(1.2,0.6) .. (2.4,-4.8) .. controls +(-0.8,-0.4) and +(0,-0.7) .. (0.8,-4) .. controls +(0,1) and +(0,-1) .. (3.4,-0.5);
 \foreach \s in {1,-1} {
 \draw[xscale=\s,color=blue] (0.8,2.1) .. controls +(0.3,-0.7) and +(-1,0) .. (4,0.6);
 \draw[xscale=\s,color=blue,dashed] (0.8,2.1) .. controls +(0.8,0) and +(-0.5,0.5) .. (4,0.6);}
 \foreach \s in {1,-1} 
 \draw[xscale=\s,color=blue] (0,5.5) .. controls +(3,0) and +(0,3) .. (5.5,0) .. controls +(0,-0.8) and +(0.8,0) .. (4,-1.5) .. controls +(-1.2,0) and +(2,0) .. (0,1);
 \foreach \t in {60,120} {
 \draw[rotate=\t,color=orange] (-6,0) .. controls +(0.4,-0.2) and +(-0.4,-0.2) .. (-4.3,0);
 \draw[rotate=\t,color=orange,dashed] (-6,0) .. controls +(0.4,0.2) and +(-0.4,0.2) .. (-4.3,0);}
\end{scope}
\end{tikzpicture}
\end{center}
\caption{Trisection diagram of $S^2\times S^2$ and quadrisection diagrams of $S^2\times S^2\times S^1$}
\label{figS2xS2xS1}
\end{figure}

We shall explain on the example of $\CP^2\times S^1$ how to draw a diagram of the multisection we have constructed. We start with the diagram of $\CP^2$ given in the left hand side of Figure~\ref{figCP2S1scheme}, whose surface is denoted by $S$, where the red curve represents $X_{12}$, the blue curve represents $X_{23}$ and the green curve represents $X_{13}$. We use the alternative scheme given in the right hand side of Figure~\ref{figCP2S1scheme} (this is just for convenience, because a nice picture came out with this scheme). We draw the surface $\Sigma$ as $N=6$ copies $S_i$ of $S$ set along a cycle and joined by tubes creating the hole in the middle, see Figure~\ref{figCP2xS1}. The $3$--dimensional handlebodies of the quadrisection are made of copies of the $3$--dimensional handlebodies of the trisection of $\CP^2$ and copies of the punctured surface $S$ product an interval, joined by tubes. For instance, in our example, the handlebody $W_{124}$ is made of:
\begin{itemize}\itemsep=0cm
 \item copies of $X_{13}$ represented by the purple curves on $S_1$ and $S_6$, 
 \item the product of a punctured $S$ with an interval running from $S_2$ to $S_3$, where the purple curves represent arcs on the punctured $S$ that define disks in the product: here we take any pair of disjoint properly embedded arcs in the punctured $S$ that cut it into a disk, their product with the interval are disks whose boundaries form a cut system of the associated $2$--handlebody,
 \item the same thing between $S_4$ and $S_5$,
 \item the tubes joining the above pieces, to which corresponds a meridian purple curve which could be drawn between $S_i$ and $S_{i+1}$ for any $i\neq2,4$.
\end{itemize}
The other cut-systems are obtained accordingly; the orange one represents $W_{123}$, the blue one $W_{134}$ and the green one $W_{234}$. If the monodromy $\varphi$ were non-trivial, then the green curves on $S_1/S_6$ would have to join arcs on $S_6$ to their images by $\varphi$ on $S_1$. Note that we have to represent the successive copies of $S$ with alternating orientation. The monodromy $\varphi$ reverses the orientation of $S$ precisely when $n$ is odd.

On this diagram, two destabilizations can be performed. Sliding the orange curve on $S_2$ along the orange curve on $S_1$ and the green curve on $S_2$ along the green curve on $S_3$, we get parallel green and purple curves dual to parallel blue and orange curves. Thanks to Proposition~\ref{prop:destabilization}, this allows to destabilize once. The second destabilization is symmetric with respect to a vertical axis.

Starting with the genus--$2$ trisection diagram of $S^2\times S^2$ given in Figure~\ref{figS2xS2xS1}, we get a quadrisection diagram of $S^2\times S^2\times S^1$ in a completely analogous manner. In particular, we use the same scheme (see Figure~\ref{figCP2S1scheme}) and the same colors. This first gives the genus--$13$ quadrisection diagram on top-left of Figure~\ref{figS2xS2xS1}. Performing four destabilizations similar to that of Figure~\ref{figCP2xS1}, we get the genus--$9$ quadrisection diagram on Figure~\ref{figS2xS2xS1}. We can then perform two more destabilizations, which successively merge the pairs of ``holes'' marked with $*$: at the first step we readily have a pair of parallel blue and purple curves, and we slide some green curves to get the destabilization conditions; at the second step we slide the green curve around the upper $*$--marked hole along the similar green curve on the right, and we slide blue and purple curves to get parallel curves joining the $*$--marked holes. Finally, we get the genus--$7$ diagram on top-right of Figure~\ref{figS2xS2xS1}.

Note that we obtain a genus--$7$ quadrisection, which is stricly less than $9=ng+1$. This might not be surprising since the last two destabilizations seems to occur by chance more than following a general pattern.

\def\cprime{$'$}
\providecommand{\bysame}{\leavevmode ---\ }
\providecommand{\og}{``}
\providecommand{\fg}{''}
\providecommand{\smfandname}{\&}
\providecommand{\smfedsname}{\'eds.}
\providecommand{\smfedname}{\'ed.}
\providecommand{\smfmastersthesisname}{M\'emoire}
\providecommand{\smfphdthesisname}{PhD thesis}

\end{document}